\definecolor{light-gray}{gray}{0.95}
\theoremstyle{plain}
\newtheorem{theorem}{Theorem}
\newtheorem{corollary}[theorem]{Corollary}
\newtheorem{lemma}[theorem]{Lemma}
\newtheorem{proposition}[theorem]{Proposition}
\theoremstyle{definition}
\theoremstyle{remark}
\newtheorem{remark}[theorem]{Remark}
\newcommand{\W}{\mathcal{W}}
\newcommand{\lebes}{\mathcal{L}^N}
\newcommand{\dd}{\mathrm{d}}
\newcommand{\supp}{\mathrm{supp}}
\newcommand{\Ob}{\overline{\Omega}}
\newcommand{\Om}{\Omega}
\newcommand{\K}{(\mathcal K)}
\newcommand{\F}{\mathcal F}
\newcommand{\M}{\mathcal M}
\newcommand{\D}{\mathcal D}
\newcommand{\uu}{\mathbf {u}}
\newcommand\restr[2]{{
		\left.\kern-\nulldelimiterspace 
		#1 
		\vphantom{\big|} 
		\right|_{#2} 
}}
\newcommand{\mres}{\mathbin{\vrule height 1.6ex depth 0pt width
		0.13ex\vrule height 0.13ex depth 0pt width 1.3ex}}
\def\1{\raisebox{2pt}{\rm{$\chi$}}}
\newcommand{\R}{\mathbb{R}}
\newcommand{\KR}{(\mathcal{KR})_H}
\newcommand{\KP}{(\mathcal{K})_H}
\newcommand{\B}{(\mathcal{B})}
\title[]{Quasi-convex Hamilton--Jacobi equations via limits of Finsler $p$-Laplace problems as $p\to \infty$}
\author[H.  Ennaji]{Hamza Ennaji\cs}
\author[N. Igbida]{Noureddine Igbida\cs}\thanks{\cs Institut de recherche XLIM, UMR-CNRS 7252, Facult\'e des Sciences et Techniques, Universit\'e de Limoges, France. \\Emails: hamza.ennaji@unilim.fr, noureddine.igbida@unilim.fr}
\author[V. T. Nguyen]{Van Thanh Nguyen\cm}\thanks{\cm Department of Mathematics and Statistics, Quy Nhon University, Vietnam.\\ Email: nguyenvanthanh@qnu.edu.vn}
\newcommand{\cs}{$^\dagger$} \newcommand{\cm}{$^\ddagger$}
\date{\today}
\begin{document}

	\begin{abstract}
		In this paper we show that the maximal viscosity solution of a class of quasi-convex Hamilton--Jacobi equations, coupled with inequality constraints on the boundary, can be recovered by taking the limit as $p\to\infty$ in a family of Finsler $p$-Laplace problems. The approach also enables us to provide an optimal solution to a Beckmann-type problem in general Finslerian setting and allows recovering a bench of known results based on the Evans--Gangbo technique.
	\end{abstract}
	
	\maketitle
	
	\section{Introduction}	
	Let $\Om$ be a smooth bounded subset of $\R^N$. Consider a continuous Hamiltonian $F:\Ob\times\R^N\rightarrow\R$ such that, for all $x\in \overline\Omega$, 
	\begin{itemize}
		\item $Z(x):=\{\xi \in \R^N:  ~F(x,\xi)\leq 0\}$ is a convex and compact subset of $\R^N$.
		\item $0\in \hbox{int} (Z(x)).$ 
	\end{itemize}
Our main aim concerns the Hamilton--Jacobi (HJ for short) equation of first order
	\begin{equation} \label{hj0}
	F(x,\nabla u)=0  \hbox{ in }\Omega.
	\end{equation}
 
	\bigskip 
	The class of HJ PDE  is  central  in several branches of mathematics, both from theoretical, numerical and application points of view. The applications in classical mechanics, optics, Hamiltonian dynamics, semi-classical quantum theory, Riemannian and Finsler geometry as well as the optimal control theory are very important.

In addition to its connection with Hamilton’s equations, in the case where the Hamiltonian has sufficient regularity, further  connection with common PDEs was established in the literature. For instance, it appears in the classical limit of the Schrödinger equation (see e.g. \cite{bardos}). Its connection with the discount HJ equation $\lambda u + 	F(x,\nabla u)=0 $ as $\lambda \to 0$ was established in the  seminal paper  \cite{LiPaVa} and generalized in \cite{DaFaIt}. The vanishing viscosity method for first order HJ equations establishes  the connection of HJ equations with the second order PDE   $-\epsilon \Delta u + 	F(x,\nabla u)=0$ as $\epsilon \to 0$ (see for instance \cite{CL,L2}). The celebrated paper of Varadhan \cite{Varadhan} shows that the heat kernel in a Riemannian manifold can be approximated by a Gaussian kernel, and thus makes the link between the heat equation and the HJ equation. This connection can be also done via Hopf--Cole transformations as showed in \cite{IDC}. This kind of transformations also allows recovering the HJ equation in the large scale hyperbolic limit of a class of kinetic equation (see e.g. \cite{Boun&Calvez}).

	Recently, the connection between HJ equation, optimal mass transport and Beckmann's problem was established in  \cite{ennaji2020augmented,HINBeckHJ} with a flavor of variational approach.  In particular, these connections work out a nonlinear divergence-form PDE, called Monge--Kantorovich equation,  that we can associate definitively with the HJ equation. The connection is not straightforward since the optimal mass transportation, the Beckmann's problem as well as the associate divergence formulation are not standard. Roughly speaking,  the offset is connected to some unknown distribution of mass concentrated on the boundary which would both, counterbalance  the involved optimal mass transportation phenomena and describe the normal-trace of the allowed   flux in the divergence formulation (see  \cite{ennaji2020augmented,HINBeckHJ} for the details).  The approach   blends sophisticated tools from variational analysis, convex duality and  trace-like operator  for the so called divergence-measure field.  To strengthen the connection with divergence equation and to shape the ''pretending diffusive taste'' of HJ equation,  we propose in this paper how to achieve  the solutions of HJ equation using an elliptic PDE of Finsler $p-$Laplace type. The Finsler structure associated with  the Hamiltonian $F$ takes part in  the PDE in a common way bringing out  some kind of anisotropic $p-$Laplace PDE, that we call here  Finsler $p-$Laplace equation.   We treat the equation \eqref{hj0}  with a double obstacle  on the boundary.  Moreover, thanks to the substantial link of HJ equation with the optimal mass transport as well as the Beckmann problem, these problems  will be concerned  in their turn with the approach using the  Finsler $p-$Laplace equation.
	  
\bigskip 

To describe roughly the approach, we consider the peculiar case  of eikonal equation with Dirichlet  boundary condition: 
\begin{equation}
	\label{eik0} \left\{ \begin{array}{ll}
		\vert \nabla u\vert =k \quad &\hbox{ in }\Omega\\  \\ 
	u=g& \hbox{ on } \partial \Omega,		
	\end{array}		
	\right.
\end{equation}
where $k$ is a positive continuous function in $\overline \Omega$ and   $\partial \Omega$ denotes the boundary of $\Omega$.  
	It is well known  by now that   the intrinsic distance  defined by 
	\begin{equation}
		\label{distanceF}
		d_{k}(x,y) := \inf_{\zeta\in \Gamma(x,y)} \int_{0}^{1}k( \zeta(t))\: \vert \dot{\zeta}(t)\vert  \dd t,
	\end{equation}
	where $\Gamma(x,y)$ is the set of Lipchitz curves joining $x$ and $y$,  describes  the maximal 
	  viscosity subsolution  through the following formula 
	\begin{equation}
		\label{viscSol}
		u(x) = \min_{y\in \partial\Om  } \left\{d_{k}(y,x) + g(y)\right\}.
	\end{equation}
 	Here  $g:\partial\Om\rightarrow\R$   is assumed  to be a continuous function satisfying the compatibility condition 
 \begin{equation}
 	\label{compat_g}
 	g(x) - g(y) \leq d_{k}(y,x),~~\mbox{for all}~x,y\in\partial\Om.
 \end{equation}  
Since  \eqref{viscSol} is likewise the unique solution of  the following maximization problem
	\begin{equation}
		\label{maxv0}
		\max_{z\in W^{1,\infty}(\Om)}\Big\{ \int_{\Omega} z(x)\dd x: ~ \vert \nabla z(x)\vert  \leq k(x)~\mbox{and}~z=g~\mbox{on}~\partial\Om \Big\}, 
	\end{equation}
we know (see  \cite{ennaji2020augmented,HINBeckHJ}) that a  dual problem of \eqref{maxv0} reads
	\begin{equation}\label{Bek0}
		\min_{\phi\in\mathcal{M}_{b}(\Ob)^{N},\: \nu\in\mathcal{M}_b(\partial\Om)}\left\{   \int_{\overline\Omega} k\: \dd\vert\phi\vert  + \int_{\partial\Om} g \dd\nu:~~-\operatorname{div}(\phi) = \chi_{\Omega}  -\nu~\mbox{in}~~\mathcal{D}^{'}(\R^N)\right\},
	\end{equation}
	which constitute  actually a new variant of Beckmann's problem with boundary cost $g$.  Here $\mathcal{M}_{b}  $ is used to denote the set of finite Radon measures. In particular,  this is  connected to the Monge optimal mass transport problem 
		\begin{equation}
		\label{M}
		 	\inf\left\{   \int_{\Omega}\: d_k( x , T(x)) \dd x \: :\:   \nu\in\mathcal{M}_b(\partial\Om),\: T_{\sharp}\chi_{\Omega}  = \nu  \right\} 
	\end{equation} 
	as well as to the Monge--Kantorovich relaxed  problem  
		\begin{equation}
		\label{K}
		\min \left \{\int_{\Omega\times\Omega} d_k( x , y ) \dd\gamma(x,y)\: :\:       \nu\in\mathcal{M}_b(\partial\Om),\:     \gamma\in \mathcal{M}^+(\Omega\times \Omega),\:  (\pi_{x})_{\sharp}\gamma= \chi_\Omega , (\pi_{y})_{\sharp}\gamma= \nu \ \right \}.
	\end{equation}
Even if  here the so called target measure $\nu$ is an unknown parameter of the problem, one sees that the problem aims certainly an optimal mass transportation between $\rho_1:= \chi_{\Omega} $ and $\rho_2:=\nu$,  and moreover $u,$ given by \eqref{viscSol} (the unique solution of \eqref{eik0}) is an Kantorovich potential of transportation.    Since the pioneering work of  Evans-Gangbo (cf. \cite{evans1999differential}) in the case where $k\equiv 1$, it is known that key information  concerning $u$ may be  given by 
the uniform limit of $u_p$, the  solution of the modified $p-$Laplace equation 
\begin{equation} 		\label{plapeik}
 	\left\{\begin{array}{ll} 	-\Delta_p \left( \frac{u_p}{k} \right) = \rho_1-\rho_2\quad  & \text {in } \overline  \Omega \\  
 		u	=g  & \text { on } \partial \Omega .  \end{array}\right.  
\end{equation}
Following the results of \cite{evans1999differential}, one can guess this  limit to be  given by the so-called Monge--Kantorovich system: 
	\begin{equation}
		\label{mksys}
		\left\{\begin{array}{ll}
			-\operatorname{div}(\Phi )=\rho_1-\rho_2, \:  	\vert \nabla u\vert \leq k    \quad & \text {in } \overline \Omega \\     
	 \Phi=  m \: \nabla u  ,\:	m\geq 0,\:  m(	\vert\nabla u\vert -k)=0   & \text { a.e. }\\  
	u	=g  & \text { on } \partial \Omega.   \end{array}\right.
	\end{equation}
Notice here  that,  a part a few  special cases out of the scope of our situation  (cf. \cite{santambrogio2015optimal} Chap. 4.3 for discussions and references about regularity properties  of $\Phi$ under extra assumptions),  in general   the flux $\Phi $ is a     vector valued  measure, and it is closely   connected to  the  solution of  Beckmann problem \eqref{Bek0}.   Coming back to the HJ equation \eqref{eik0}, it is clear now that the Monge--Kantorovich system is a suitable divergence equation for the solution of \eqref{eik0}. Moreover, the  limit of the  flux of \eqref{plapeik} converges weakly to $\Phi$   picturing thereby   some kind of ''nonlinear diffusion'' phenomena behind the Hamilton-Jacobi equation. 

\subsection*{Contributions} 
In this paper, we are interested in studying the connection between the HJ equation, coupled with inequality constraints on the boundary, 
\begin{equation}
	\label{hjobs0} \left\{ \begin{array}{ll}
		F(x,\nabla u)=0 \quad &\hbox{ in }\Omega\\  \\ 
		\phi\leq u\leq \psi & \hbox{ on } \partial \Omega	
	\end{array}		
	\right.
\end{equation}
and an elliptic problem of Finsler $p-$Laplace type that we will introduce below.  

We show  how  to recover the maximal viscosity subsolution to the class of HJ  equations of the type \eqref{hjobs0}
using a family of Finsler $p$-Laplace problems (with boundary obstacles) as $p\to\infty$.  Moreover, since the solution of \eqref{hjobs0}   is intimately linked to the so called Kantorovich-Rubinstein problem in optimal transport, an appropriate Beckmann's transportation problem is derived and its solution is provided. Essentially, this will be the content of Theorem \ref{thm:main_results} whose proof relies on the results and estimates of Propositions \ref{psolfinsp} and \ref{prop:main_estimates}.  Finally, we show in Proposition \ref{prop4} that the limit as $p\to\infty$ of solutions of the $p$-Laplace problems is a Kantorovich potential for a classical Kantorovich problem involving the normal trace on the boundary of the optimal flow of Beckmann's problem. Our work illustrates some kind of ”nonlinear diffusion” phenomena behind the Hamilton-Jacobi equation.

	\subsection*{Related works}
	Concerning limits as $p\to \infty$ for the $p$-Laplace equations, one of the first mathematical studies is  \cite{bhattacharya1989limits} with particular interest in torsional problems and $\infty$-harmonic functions, followed by the celebrated work of Evans and Gangbo \cite{evans1999differential}. Similar problems were considered in \cite{Rossi1,Rossi2} for transport problems with masses supported on the boundary. Variants of Monge-Kantorovich problems  with boundary costs were addressed in \cite{mazon2014optimal} where the boundary costs can be seen as some import/export taxes. In the same spirit, similar results were obtained in \cite{dweik2018weighted} with some weighted Euclidean distance as a cost. The use of PDE techniques à la Evans--Gangbo in the Finsler framework was addressed recently in \cite{igbida2018optimalmass}. It is well known that Finsler metrics generalise the Riemannian ones and are of main interest in the study of optimal transport and minimal flow problems since they allow considering anisotropy, obstacles...
	
	Our work adds to these series of papers linking HJ equations to other PDE's, thanks to the variational approach (cf. \cite{ennaji2020augmented}) and permits generalizing the works on mass transport recalled above. It shows once again the flexibility of the Evans-Gangbo method.

\bigskip 

 The rest of this paper is organized as follows. In section \ref{pre}, we present assumptions and preliminary results concerning the notion of solution to the HJ equation coupled with obstacles on the boundary under consideration, Finsler $p$-Laplace equations as well as their existence and characterization of solutions. In section \ref{limits}, we derive suitable estimates independent of $p$ and show the convergence of Finsler $p$-Laplace equations as $p\to \infty$. The existence and characterization of solutions to the limited variational problems are also studied in detail. Finally, the connection between the limited variational problems and a variant of Monge--Kantorovich transportation problem is derived in section \ref{connection}.

  \section{Preliminaries}\label{pre}
  \subsection{Maximal viscosity subsolution}
  	\hfill\\
  Consider the Hamilton--Jacobi equation of first order,  coupled with some inequality constraints on the boundary
  \begin{equation}\label{hjobs}
  \begin{cases}
 F(x, \nabla u)=0 & \text{ in } \Omega\\
 \phi\le u\le \psi & \text{ on }\partial \Omega.
  \end{cases}
  \end{equation}
  Here, $\phi,\psi \in C(\partial\Omega)$ satisfy the compatibility condition
  \begin{equation}
  \label{comp000}
  \phi(x) - \psi(y) \leq d_{\sigma}(y,x)~\hbox{for all}~x,y\in\partial\Omega, 
  \end{equation} 
  with $d_\sigma$ being the intrinsic metric associated to $F$ (see below).

For each $x\in\Ob$, we define the support function $\sigma(x, .)$ of the $0$-sublevel set of $F$ by
	\begin{equation}
		\sigma(x,q) = \sup_{p\in Z(x)} \langle p,q\rangle ~~~\mbox{for all}~q\in\R^N,
	\end{equation}
	which turns to be a Finsler metric (see subsection \ref{subsec:FinslerpLaplace} below). Then, the intrinsic distance associated to $F$ is defined through
	\begin{equation}
		\label{distanceF}
		d_{\sigma}(x,y) := \inf_{\zeta\in \Gamma(x,y)} \int_{0}^{1} \sigma(\zeta(t),\dot{\zeta}(t)) \dd t,
	\end{equation}
	where $\Gamma(x,y)$ is the set of Lipchitz curves joining $x$ and $y$. In the case where $\phi \equiv \psi = g:\partial\Om\rightarrow\R$ is a continuous function satisfying the compatibility condition 
	\begin{equation}
	\label{compat_g_sigma}
	g(x) - g(y) \leq d_{\sigma}(y,x)~~\mbox{for all}~x,y\in\partial\Om,
	\end{equation}
	it is well known (see e.g. \cite{Fathi,L2}) that the maximal viscosity subsolution of
	\begin{equation}\label{hj1}
		\begin{cases}
		F(x,\nabla u)=0 & \text{ in } \Omega\\
		u=g &\text{ on } \partial\Om
		\end{cases}
	\end{equation} 
	is given by
	\begin{equation}
		\label{viscSol1}
		u(x) = \min_{y\in \partial\Om  } \left\{d_{\sigma}(y,x) + g(y)\right\}.
	\end{equation} 
Moreover, this solution coincides with the maximal volume solution.  Indeed, using the fact that the set of all viscosity subsolutions of \eqref{hj1} coincides with the set of Lipschitz functions $u$ satisfying
\begin{equation}
	\label{sigma_star_ball}
	\sigma^{*}(x,\nabla u(x))\leq 1~\mbox{a.e.},
\end{equation}
where $\sigma^*$ is the dual of the support function $\sigma$ defined through
\begin{equation}
	\sigma^{*}(x,q) = \sup_{\sigma(x,p)\leq 1} \langle p,q \rangle,
\end{equation}
we proved in \cite{ennaji2020augmented} that \eqref{viscSol1} is the unique solution of the following maximization problem
\begin{equation}
	\label{maxv}
	\max_{z\in W^{1,\infty}(\Om)}\Big\{ \int_{\Omega} z(x)\dd x,~\sigma^{*}(x,\nabla z (x)) \leq 1~\mbox{and}~z=g~\mbox{on}~\partial\Om \Big\}.
\end{equation}

Now, for the study of the general problem \eqref{hjobs} with inequality constraints on the boundary, we make use of a similar notion  of solution. 
Actually we have 
 \begin{proposition}
 	Under the assumption \eqref{comp}, the problem \eqref{hjobs} has a unique solution $u$ in the sense of maximal volume, that is, $u$ is the unique solution to the following maximization problem
 	\begin{equation}
 		\label{maxobs}
 		\max_{z\in W^{1,\infty}(\Om)}\Big\{ \int_{\Omega} z(x)\dd x,~\sigma^{*}(x,\nabla z (x)) \leq 1~\mbox{and}~\phi\leq z\leq \psi ~\mbox{on}~\partial\Om \Big\}.
 	\end{equation} 
 	Moreover, $u$ is the maximal viscosity subsolution satisfying  $\phi\leq u\leq \psi$ on $ \partial\Om.$ 
 \end{proposition}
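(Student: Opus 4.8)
The plan is to exhibit the maximizer explicitly and then show that it dominates every competitor pointwise. Guided by the Dirichlet formula \eqref{viscSol1}, I would take as candidate
\[
u(x) := \inf_{y\in\partial\Om}\big\{ d_{\sigma}(y,x)+\psi(y)\big\},
\]
which is morally the largest function compatible with the upper obstacle $\psi$ that still obeys the intrinsic Lipschitz bound. I would then establish three things: (i) $u$ is admissible for \eqref{maxobs}; (ii) $z\le u$ pointwise on $\Ob$ for every admissible $z$, whence $u$ is the \emph{unique} maximizer; and (iii) $u$ is the maximal viscosity subsolution with $\phi\le u\le\psi$ on $\partial\Om$.

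For admissibility, note first that $x\mapsto d_{\sigma}(y,x)$ is Lipschitz uniformly in $y$ and $\psi$ is bounded, so $u$ is Lipschitz and the infimum is attained. Inserting the triangle inequality $d_{\sigma}(w,x)\le d_{\sigma}(w,x')+d_{\sigma}(x',x)$ inside the infimum gives $u(x)-u(x')\le d_{\sigma}(x',x)$ for all $x,x'$, which by the characterization \eqref{sigma_star_ball} of admissible gradients is exactly $\sigma^{*}(x,\nabla u)\le 1$ a.e. On $\partial\Om$, choosing $w=x$ in the infimum yields $u(x)\le\psi(x)$, while the compatibility condition \eqref{comp000}, rewritten as $\phi(x)\le\psi(y)+d_{\sigma}(y,x)$, forces every term of the infimum to be at least $\phi(x)$, hence $u(x)\ge\phi(x)$. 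Thus $\phi\le u\le\psi$ on $\partial\Om$ and $u$ is admissible.

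For maximality I would use the Cauchy-type inequality $\langle p,q\rangle\le\sigma(x,p)\,\sigma^{*}(x,q)$ attached to the support function. If $z$ is admissible, then along any curve $\zeta\in\Gamma(x,y)$,
\[
z(y)-z(x)=\int_{0}^{1}\langle\nabla z(\zeta(t)),\dot{\zeta}(t)\rangle\,\dd t\le\int_{0}^{1}\sigma(\zeta(t),\dot{\zeta}(t))\,\dd t,
\]
so that optimizing over $\zeta$ gives $z(y)-z(x)\le d_{\sigma}(x,y)$, i.e. $z(x)-z(y)\le d_{\sigma}(y,x)$. Fixing $x\in\Om$ and any $y\in\partial\Om$ and using $z\le\psi$ on the boundary gives $z(x)\le\psi(y)+d_{\sigma}(y,x)$; taking the infimum over $y$ yields $z\le u$ on $\Ob$. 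In particular $\int_{\Omega}z\le\int_{\Omega}u$, so $u$ solves \eqref{maxobs}; and if $z$ is another maximizer, then $z\le u$ together with equal integrals forces $z=u$ a.e. in $\Omega$, hence everywhere by continuity, giving uniqueness. Finally, since the admissible set of \eqref{maxobs} coincides with the set of viscosity subsolutions of $F(x,\nabla u)=0$ satisfying $\phi\le\cdot\le\psi$ on $\partial\Om$, the bound $z\le u$ identifies $u$ as the maximal such subsolution.

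The parts I expect to require the most care are the two equivalences borrowed from the preliminaries: that $\sigma$ is a genuine (possibly asymmetric) Finsler metric for which both the inequality $\langle p,q\rangle\le\sigma(x,p)\,\sigma^{*}(x,q)$ and the metric characterization $\sigma^{*}(x,\nabla z)\le 1\Leftrightarrow z(x)-z(y)\le d_{\sigma}(y,x)$ hold, and that viscosity subsolutions of \eqref{hjobs} are exactly these intrinsic $1$-Lipschitz functions. Granting these, the argument above is the boundary-to-interior propagation already used in the Dirichlet case, the only genuinely new ingredient being that the lower obstacle $\phi$ never becomes active, thanks precisely to \eqref{comp000}.
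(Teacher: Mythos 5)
Your proof is correct. A point of comparison worth noting first: the paper states this proposition \emph{without} proof, presenting it as the natural extension of the Dirichlet case \eqref{viscSol1}--\eqref{maxv} established in \cite{ennaji2020augmented}, so your argument fills a gap rather than duplicates an existing one — and it does so with exactly the ingredients the paper itself relies on. Your candidate $u(x)=\inf_{y\in\partial\Om}\{d_{\sigma}(y,x)+\psi(y)\}$ is precisely the comparison function $v$ that the authors introduce later in the proof of Proposition \ref{prop:main_estimates}, where they also record the two facts your argument hinges on: that the compatibility condition \eqref{comp000} gives $\phi\le v\le\psi$ on $\partial\Om$, and that intrinsic $1$-Lipschitz continuity with respect to $d_\sigma$ is equivalent to the a.e.\ gradient bound \eqref{sigma_star_ball} (cited there as Proposition 2.1 of \cite{ennaji2020augmented}). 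The only step you should not present as a direct computation is the identity $z(y)-z(x)=\int_0^1\langle\nabla z(\zeta(t)),\dot\zeta(t)\rangle\,\dd t$ for an admissible competitor $z$: since $z$ is merely Lipschitz, $\nabla z$ exists only a.e., and a given Lipschitz curve can spend positive time inside the exceptional null set, so the implication ``$\sigma^{*}(x,\nabla z)\le 1$ a.e.\ $\Rightarrow z(x)-z(y)\le d_{\sigma}(y,x)$'' must either be quoted from that same equivalence or proved by a mollification argument (approximate $z$ by smooth functions whose gradients satisfy the constraint up to $o(1)$, using continuity of $\sigma^{*}$). You flag this dependence explicitly, so it is a presentational caveat rather than a genuine gap. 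Granting that preliminary — which the paper invokes repeatedly — your admissibility check, the pointwise domination $z\le u$ of every competitor, uniqueness via equal integrals plus continuity, and the identification of the admissible set of \eqref{maxobs} with the viscosity subsolutions of \eqref{hjobs} constrained between $\phi$ and $\psi$ are all sound, and together they prove both assertions of the proposition.
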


 	\subsection{Finsler $p-$Laplacian equation}\label{subsec:FinslerpLaplace}
 	\hfill\\
	Let $\Omega$ be a bounded open subset of $\R^N$, a Finsler metric is a continuous function $H: \overline\Om \times \R^N \to [0,\infty)$ such that $H(x, .)$ is convex, and positively $1-$homogeneous in the second variable, that is, $H(x,tp) =t H(x,p)$  for every $t\geq 0$. 
	\\We define the dual of a Finsler metric $H$  (which is also a Finsler metric) by
	\[
	H^*(x,q) = \sup_{H(x,p)\leq 1} \langle p,q\rangle
	=\sup_{p\neq 0} \frac{\langle p,q\rangle}{H(x,p)}.\]
	
	In this paper, we assume that $H$ is a non-degenerate Finsler metric, that is, there exist $a,b >0$ such that
	\begin{equation}
		\label{equiv}
		a\vert p\vert \leq H(x,p)\leq b \vert p\vert
	\end{equation}
	for all $(x,p)\in \overline\Omega\times \R^N$. In other words, one has 
	\begin{equation}
		\label{equivhs}
	\tilde{a}\vert q\vert \leq H^*(x, q)\leq \tilde{b}\vert q\vert
	\end{equation}
	for some $\tilde{a},\tilde{b}>0$. Moreover, we have the Cauchy--Schwarz like inequality
	
	\begin{equation}
		\label{cs}
		\langle p,q\rangle \leq H(x,p) H^* (x,q).
	\end{equation}
	Euler's homogeneous function theorem (see e.g. \cite{nesterov2013introductory}) says that 
	\begin{equation}
		\label{ehf}
		\partial_\xi H^{*}(x,p)\cdot p=  H ^*(x,p)~~\mbox{for any}~~p\in\R^N,
	\end{equation}
	and by convexity of $H^*$, we have
	\begin{equation}
		\label{fin_p1}
		\partial_\xi H ^{*}(x,p)\cdot q\leq  H ^*(x,q)~~\mbox{for any}~~p,q\in\R^N.
	\end{equation}
	Thus, using \eqref{equivhs} we get
	\begin{equation}
		\label{fin_p2}
		\vert\partial_\xi H ^{*}(x,p)\cdot q\vert \leq  \tilde{b}\vert q\vert~~\mbox{for any}~~p,q\in\R^N.
	\end{equation}
	Finally, we have
	\begin{equation}
		\label{fin_p3}
		H(x,\partial_\xi H^*(x,p)) = 1 ~~\mbox{for any}~p\in\R^N.
	\end{equation}
For details and additional properties we refer the reader to \cite{Schneiuder}.

\bigskip 
	Every Finsler metric induces a Finsler distance via the so called length (or action) functional. The action of a Lipschitz curve $\xi\in \hbox{Lip}([0,1];\Ob)$ is defined through
	\begin{equation}\label{action}
		A_{H}(\xi) = \int_{0}^{1} H(\xi(s),\dot\xi(s)) \dd s.
	\end{equation}
	The induced distance $d_H$ by the action functional \eqref{action} reads as
	\begin{equation}
		\label{dfin}
		d_H(x,y) = \inf_{\xi\in \Gamma(x,y)} A_H(\xi).
	\end{equation}
Note that in general, $H(x,p)$ is not even in $p$ so that $d_H$ may be non-symmetric, i.e., it may happen that $d_{H}(x,y) \neq d_{H}(y,x)$. 

\bigskip 
Assuming that $H ^*(x, .)\in C^1(\R^N\setminus\{0\})$ and the compatibility condition 
	\begin{equation}\label{comp}
	\phi(x) - \psi(y) \le d_H(y,x)~\hbox{for all}~x,y\in\partial\Omega,
	\end{equation}
	we consider the following Finsler (also called anisotropic) $p-$Laplace  problems

	\begin{equation}
		\label{MKp0}
		\left\{\begin{array}{ll}-\operatorname{div}(H ^{*}(x,\nabla u_p)^{p-1}\partial_{\xi}H^{*}(x,\nabla u_p))=\rho \quad & \text {in } \Omega \\   \\  
			\phi \leq u_p \leq \psi & \text {on } \partial \Omega,\\ 
		\end{array}\right.
	\end{equation}	
	where  $p>N$ and $\rho\in L ^2(\Omega)$ are given, and $\partial_\xi H^*$ stands for the derivative of $H^*$ with respect to the second variable.   To study this problem let us consider the set 
		$$\W_{\phi,\psi} = \{ u\in W^{1,p}(\Omega):~\phi \leq u \leq\psi ~\mbox{on}~\partial\Om\}$$
		and we denote by 
	\begin{equation}
		\Theta_p = H ^{*}(x,\nabla u_p) ^{p-1} \partial_{\xi}H ^{*}(x,\nabla u_p).
	\end{equation}  
	\begin{proposition}
			Assume \eqref{comp} is strict, that is,
		\begin{equation}
			\label{scomp00}
			\phi(x) - \psi(y) < d_H(y,x)~\hbox{for all}~x,y\in\partial\Omega.
		\end{equation}
		\label{psolfinsp}
		The problem  \eqref{MKp0} has a unique solution $u_p  $ in the following sense: $u_p\in \W_{\phi,\psi}  $ and 
		\begin{equation}\label{varsolp}
			\int_{\Omega  }  	\Theta_p \cdot \nabla (u_p-\xi)\: \dd x \leq \int_\Omega \rho\: (u_p-\xi)\: \dd x \quad \hbox{ for any }\xi\in   \W_{\phi,\psi}. 
		\end{equation}
		Moreover, the distribution defined through
		\begin{equation}
			\label{dist}
			\langle \Theta_p\cdot\mathbf{n},\eta\rangle = \int_{\Omega}\Theta_p\nabla\eta \dd x- \int_{\Omega}\eta\rho \dd x,~\eta\in\D(\R^N),
		\end{equation}
		is a Radon measure concentrated on $\partial\Omega$ which satisfies 
		\begin{equation}
		\label{mf1}
		\int_{\Om} \Theta_p\cdot\nabla \eta\dd x = \int_{\Om} \eta\rho\dd x+ 	\int_{\partial\Om}\eta \dd(\Theta_p\cdot\mathbf{n})~~\mbox{for all}~\eta\in W^{1,p}(\Om),
	\end{equation}
	 and 	\begin{equation}  \label{boundarybehaviorp}
	 	\supp(\left(\Theta_p\cdot\mathbf{n}\right)^{+})\subset  \{u_p = \phi\}~~\mbox{and}~~\supp(\left(\Theta_p\cdot\mathbf{n}\right)^{-})\subset\{u_p = \psi\}.
	 \end{equation} 
	\end{proposition}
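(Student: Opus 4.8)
The plan is to read \eqref{MKp0} as the Euler--Lagrange problem of the strictly convex energy
\begin{equation*}
J_p(u) = \frac1p\int_\Omega H^*(x,\nabla u)^p\,\dd x - \int_\Omega \rho\,u\,\dd x
\end{equation*}
minimized over the closed convex set $\W_{\phi,\psi}$, noting that by the chain rule $\Theta_p=H^*(x,\nabla u_p)^{p-1}\partial_\xi H^*(x,\nabla u_p)$ is the partial gradient in the second variable of the convex integrand $\xi\mapsto \tfrac1p H^*(x,\xi)^p$. First I would run the direct method. By \eqref{equivhs} one has $H^*(x,\nabla u)^p\ge \tilde a^p|\nabla u|^p$, so the gradient term controls $\|\nabla u\|_{L^p}$; since every $u\in\W_{\phi,\psi}$ has boundary trace pinched between the bounded functions $\phi,\psi$, a Poincaré-type inequality then controls $\|u\|_{L^p}$, giving coercivity of $J_p$ on $\W_{\phi,\psi}$ (which is nonempty because $\phi\le\psi$, itself a consequence of \eqref{comp} at $x=y$). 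Convexity plus continuity yields weak lower semicontinuity, and $\W_{\phi,\psi}$ is convex and closed hence weakly closed, so a minimizing sequence is bounded in $W^{1,p}(\Omega)$ and its weak limit is a minimizer. Uniqueness comes from strict monotonicity of $\xi\mapsto\Theta_p(\xi)$: inserting two minimizers into \eqref{varsolp} as competitors for one another gives $\int_\Omega(\Theta_p(\nabla u_1)-\Theta_p(\nabla u_2))\cdot\nabla(u_1-u_2)\,\dd x\le 0$, forcing $\nabla u_1=\nabla u_2$ a.e., and the residual additive constant is fixed by the source term together with the constraints. The inequality \eqref{varsolp} is precisely the optimality condition $\langle J_p'(u_p),\xi-u_p\rangle\ge 0$.

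Next I would test \eqref{varsolp} with $\xi=u_p\pm\eta$ for $\eta\in\mathcal D(\Omega)$: this is admissible since $\eta$ vanishes on $\partial\Omega$ and hence leaves the boundary constraint intact, and using both signs yields $\int_\Omega\Theta_p\cdot\nabla\eta\,\dd x=\int_\Omega\rho\eta\,\dd x$, i.e. $-\operatorname{div}\Theta_p=\rho$ in $\mathcal D'(\Omega)$. Consequently the distribution $\Theta_p\cdot\mathbf n$ of \eqref{dist} annihilates $\mathcal D(\Omega)$ and is supported on $\partial\Omega$. Note that $\Theta_p\in L^{p/(p-1)}(\Omega)^N$ by \eqref{equivhs} and \eqref{fin_p2}, while $\rho\in L^2(\Omega)$, so the right-hand side of \eqref{dist} is a genuine distribution.

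The support and sign structure \eqref{boundarybehaviorp} is where the hypothesis $p>N$ enters: by Morrey's embedding $u_p\in C(\overline\Omega)$, so the contact sets $\{u_p=\phi\}$, $\{u_p=\psi\}$ are closed in $\partial\Omega$ and $U_\psi:=\{u_p<\psi\}$, $U_\phi:=\{u_p>\phi\}$ are relatively open. For $\eta\in\mathcal D(\R^N)$ with $\eta\ge 0$ and $\supp\eta\cap\partial\Omega\subset U_\psi$, continuity of $u_p$ supplies a uniform gap on the compact set $\supp\eta\cap\partial\Omega$, so $u_p+t\eta\in\W_{\phi,\psi}$ for small $t>0$; inserting $\xi=u_p+t\eta$ in \eqref{varsolp} and dividing by $-t$ gives $\langle\Theta_p\cdot\mathbf n,\eta\rangle\ge 0$. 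Symmetrically, $\eta\ge0$ with $\supp\eta\cap\partial\Omega\subset U_\phi$ and the competitor $\xi=u_p-t\eta$ give $\langle\Theta_p\cdot\mathbf n,\eta\rangle\le 0$. Thus $\Theta_p\cdot\mathbf n$ is nonnegative on $U_\psi$ and nonpositive on $U_\phi$, hence by Schwartz's theorem a signed Radon measure on each, the two descriptions agreeing (and vanishing) on the overlap $U_\phi\cap U_\psi=\{\phi<u_p<\psi\}$. A partition of unity subordinate to $\{U_\phi,U_\psi\}$ then exhibits $\Theta_p\cdot\mathbf n$ as a Radon measure, with positive part supported in $\{u_p=\phi\}$ and negative part in $\{u_p=\psi\}$, which is \eqref{boundarybehaviorp}. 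Once this is known, \eqref{mf1} follows from \eqref{dist} by density, since $W^{1,p}(\Omega)\hookrightarrow C(\overline\Omega)$ lets one approximate any $\eta\in W^{1,p}(\Omega)$ both in norm and uniformly up to the boundary, and all three terms pass to the limit using $\Theta_p\in L^{p/(p-1)}$, $\rho\in L^2$, and continuity of integration against the measure.

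The step I expect to be the main obstacle is establishing that the boundary distribution is genuinely a (globally finite) measure and gluing the one-sided pieces: the coincidence set $\{u_p=\phi=\psi\}$ lies outside the open cover $\{U_\phi,U_\psi\}$, so it must be handled separately when asserting global finiteness (it is harmless for \eqref{boundarybehaviorp} itself, since it sits in both contact sets). The continuity $u_p\in C(\overline\Omega)$ afforded by $p>N$ is exactly what legitimizes the small one-sided perturbations, and the strictness \eqref{scomp00} is what will make the resulting objects behave well in the subsequent passage $p\to\infty$. A secondary subtlety is pinning the additive constant in the uniqueness argument, which relies on the specific sign of the source together with the obstacle constraints.
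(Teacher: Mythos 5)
Your construction follows the paper's route essentially step for step: minimize $\F_p(u)=\int_\Omega \tfrac1p H^*(x,\nabla u)^p\,\dd x-\int_\Omega u\rho\,\dd x$ over $\W_{\phi,\psi}$, test the variational inequality with $\xi=u_p\pm\eta$, $\eta\in\D(\Om)$, to get $-\operatorname{div}\Theta_p=\rho$ in $\D'(\Om)$, then use one-sided perturbations that respect whichever obstacle is inactive to extract the sign structure of $\Theta_p\cdot\mathbf{n}$. The only organizational difference is the measure-theoretic step: the paper uses the strictness hypothesis to conclude that the compact contact sets $\{u_p=\phi\}$ and $\{u_p=\psi\}$ are disjoint, and splits $\Theta_p\cdot\mathbf{n}=D_1+D_2$ with two fixed separating cutoffs $\eta_1,\eta_2$, whereas you restrict the distribution to the relatively open sets $U_\phi=\{u_p>\phi\}$, $U_\psi=\{u_p<\psi\}$ and glue with a partition of unity; these are the same idea in different clothing.

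There is, however, a genuine gap, and it is exactly the point you flag at the end but leave unresolved: your gluing argument needs $\{U_\phi,U_\psi\}$ to cover $\partial\Om$, and you never establish this --- indeed you nowhere use the standing hypothesis \eqref{scomp00}. That hypothesis is precisely what closes the hole: taking $x=y$ in \eqref{scomp00} gives $\phi(x)-\psi(x)<d_H(x,x)=0$, so $\phi<\psi$ everywhere on $\partial\Om$; a boundary point outside $U_\phi\cup U_\psi$ would satisfy $\psi\le u_p\le\phi<\psi$, a contradiction. Hence the coincidence set $\{u_p=\phi=\psi\}$ you worry about is empty, the cover is complete, and global finiteness follows from compactness of $\partial\Om$. (This is the same role strictness plays in the paper, where it yields the disjointness needed for $\eta_1,\eta_2$ to exist.) Two smaller imprecisions: nonemptiness of $\W_{\phi,\psi}$ does not follow from $\phi\le\psi$ alone (a continuous boundary datum need not lie in the trace space of $W^{1,p}$); what one should use is the compatibility condition \eqref{comp}, which makes the Lipschitz function $v(x)=\min_{y\in\partial\Om}\{\psi(y)+d_H(y,x)\}$ an admissible competitor --- the same function the paper uses in its estimates. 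And your uniqueness argument, pinning the additive constant ``by the source term together with the constraints,'' only works when $\int_\Om\rho\,\dd x\neq0$; if $\int_\Om\rho\,\dd x=0$, two minimizers may differ by a constant and nothing you invoke excludes this. The paper's appeal to strict convexity of $\F_p$ has the same defect (constants lie in the kernel of the gradient term), so this last point is a shared imprecision rather than a flaw specific to your argument, but you should not present it as routine.
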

	\begin{proof} We consider   the following minimization problem of Finsler $p$-Laplace type
	\begin{equation}
		\label{vp}
		\min_{u\in \W_{\phi,\psi}}  \F_p(u):=\int_{\Omega} \frac{H ^*(x,\nabla u)^p}{p} \dd x - \int_{\Omega} u \rho \dd x. 
	\end{equation}
   Observe that $\W_{\phi,\psi}$ is a closed, convex subset of $W^{1,p}(\Omega)$. The functional  $\F_p$ is coercive, strictly convex and lower semicontinuous on $\W_{\phi,\psi}$. Therefore $\F_p$ admits a unique minimizer on $\W_{\phi,\psi}$ which satisfies \eqref{varsolp}. 
    
		Now, to prove \eqref{mf1} we follow the main ideas of \cite[Thereom 3.4]{mazon2014optimal}.  Clearly,  \eqref{varsolp}  implies $-\operatorname{div}(\Theta_p) = \rho$ in $\mathcal{D}^{'}(\Om)$. It follows that the  $\Theta_p\cdot\mathbf{n}$ defined by \eqref{dist} is a distribution supported on $\partial\Omega$. Let us show moreover that 
		\begin{equation} \supp\left(\Theta_p\cdot\mathbf{n}\right) \subset \left\{x\in\partial\Om:~u_p(x) = \phi(x)\right\}\cup \left\{x\in\partial\Om:~u_p(x) = \psi(x)\right\}.
		\end{equation}
		Take a test function $\eta\in C^{\infty}(\Ob)$ whose support is disjoint from $\left\{x\in\partial\Om:~u_p(x) = \phi(x)\right\}\cup \left\{x\in\partial\Om:~u_p(x) = \psi(x)\right\}$. There exists some $\epsilon>0$ so that $u_p+t\eta$ remains admissible for \eqref{vp} for $|t|<\epsilon$, \textit{i.e.}, $\phi \leq u_p + t\eta \leq \psi$. By optimality of $u_p$, we get the variational inequality
		\[
		\int_{\Om} \Theta_p\cdot\nabla (v - u_p) \dd x \geq \int_{\Om} (v - u_p) \rho \dd x~~\mbox{for all}~v\in \W_{\phi,\psi}.
		\]
		In particular, for $v =  u_p + t\eta$, we get
		\[
		t \int_{\Om} \Theta_p\cdot \nabla \eta \dd x \geq t \int_{\Om} \eta\rho\dd x.
		\]
		This holds for positive and negative $t$, such that $\vert t\vert\leq\epsilon$. Consequently
		\[
		\int_{\Omega} \Theta_p\cdot \nabla \eta \dd x = \int_\Omega \eta \rho \dd x .
		\]
		In other words, $\langle \Theta_p\cdot\mathbf{n}, \eta\rangle = 0$ and $\supp(\Theta_p\cdot\mathbf{n}) \subset \{u_p = \phi\}\cup \{u_p= \psi\}.$ We are now in a position to show that $\Theta_p\cdot\mathbf{n}$ is actually a Radon measure. Indeed, the inequiality \eqref{scomp00} implies that the two compact sets $\left\{x\in\partial\Om:~u_p(x) = \phi(x)\right\}$ and $\left\{x\in\partial\Om:~u_p(x) = \psi(x)\right\}$ are disjoint. There exist $\eta_1,\eta_2\in\D(\R^N)$ such that
		\[
		\eta_1(x) = \begin{cases}
			1~\mbox{on}~\{u_p = \phi\},\\0~\mbox{on}~\{u_p= \psi\},
		\end{cases}~~\mbox{and}~~		\eta_2(x) = \begin{cases}
			1~\mbox{on}~\{u_p= \psi\},\\0~\mbox{on}~\{u_p = \phi\}.
		\end{cases}
		\]
		Then we can write $\Theta_p\cdot\mathbf{n}=D_1+D_2$, where $D_1, D_2$ are distributions given by
		\[
		\langle D_1,\eta\rangle = \langle \Theta_p\cdot\mathbf{n}, \eta\eta_1 \rangle~~\mbox{and}~~\langle D_2,\eta\rangle = \langle \Theta_p\cdot\mathbf{n}, \eta\eta_2 \rangle.
		\]
		This being said, for any positive test function $\eta$, we have that $\supp(\eta\eta_1)\cap \{u_p= \psi\} = \emptyset$, and  for $0\leq t<\epsilon$ we have $u_p +t (\eta\eta_1)\in \W_{\phi,\psi}$. Consequently
		\[
		t \int_{\Om} \Theta_p\cdot \nabla (\eta\eta_1) \dd x \geq t \int_{\Om} (\eta\eta_1)\rho\dd x,
		\]
		\textit{i.e}, 
		\begin{equation}
			\label{dist1}
			\langle D_1,\eta\rangle \geq 0.
		\end{equation}
		On the other hand, for any positive test function $\eta$, we have that $\supp(\eta\eta_2)\cap \{u_p= \phi\} = \emptyset$ and for $-\epsilon < t\leq 0$, we have that  $u_p +t (\eta\eta_2)\in \W_{\phi,\psi}$. Consequently
		\[
		t \int_{\Om} \Theta_p\cdot \nabla (\eta\eta_2) \dd x \geq t \int_{\Om} (\eta\eta_2)\rho\dd x.
		\]
		In other words,
		\begin{equation}
			\label{dist2}
			\langle D_2,\eta\rangle \leq 0.
		\end{equation}
		In conclusion, $D_1$ and $-D_2$ are positive distributions. Hence, they are positive Radon measures. It follows that the distribution $\Theta_p\cdot\mathbf{n}$ is a Radon measure on $\partial\Om$. Moreover,  \eqref{dist1} and \eqref{dist2} give \eqref{boundarybehaviorp}.  
	
		\end{proof}

	Thanks to the proof of Proposition \ref{psolfinsp}, we have the following description of the solution. 
	
	\begin{corollary}
	If  $H ^*(x, .)\in C^1(\R^N\setminus\{0\})$, then $u_p$ is the unique solution of the problem
	  	
		\begin{equation}
			\label{MKp}
			\left\{\begin{array}{ll}-\operatorname{div}(H ^{*}(x,\nabla u_p)^{p-1}\partial_{\xi}H^{*}(x,\nabla u_p))=\rho \quad & \text {in } \Omega \\ 
				H ^{*}(x,\nabla u_p)^{p-1}\partial_{\xi}H ^{*}(x,\nabla u_p)\cdot\mathbf{n} \geq 0 & \text {on }\left\{u_p = \phi\right\} \\
				H ^{*}(x,\nabla u_p)^{p-1}\partial_{\xi}H ^{*}(x,\nabla u_p)\cdot\mathbf{n} \leq 0 & \text {on }\left\{u_p= \psi\right\} \\ 
				H ^{*}(x,\nabla u_p)^{p-1}\partial_{\xi}H ^{*}(x,\nabla u_p)\cdot\mathbf{n}  = 0 & \text {in } \{\phi < u_p <\psi\}\\
				\phi \leq u_p \leq \psi & \text {on } \partial \Omega, \\ 
			\end{array}\right.
		\end{equation}
		where $\mathbf{n}$ is the exterior normal to the boundary $\partial\Omega$, in the sense that  $u_p\in \W_{\phi,\psi}  $, $\Theta_p\in L^{p'}(\Omega)^N,$  $  \Theta_p\cdot\mathbf{n} \in \mathcal M_b(\partial \Omega),$   and the triplet $(u_p,\Theta_p, \Theta_p\cdot\mathbf{n} )$ satisfies  	\eqref{mf1}-\eqref{boundarybehaviorp}. 
		 
			\end{corollary}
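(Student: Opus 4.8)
The plan is to read off the strong formulation \eqref{MKp} directly from the three conclusions of Proposition \ref{psolfinsp}, the only genuinely new work being the $L^{p'}$-integrability of the nonlinear flux $\Theta_p$ and the conversion of the support statements \eqref{boundarybehaviorp} into pointwise sign conditions on the boundary. Throughout I use that $H^*(x,\cdot)\in C^1(\R^N\setminus\{0\})$ and $H^*(x,0)=0$, so that $\Theta_p=H^*(x,\nabla u_p)^{p-1}\partial_\xi H^*(x,\nabla u_p)$ is well defined a.e.\ in $\Om$, with the convention $\Theta_p=0$ on $\{\nabla u_p=0\}$, where the factor $H^*(x,\nabla u_p)^{p-1}$ already vanishes.

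First I would establish $\Theta_p\in L^{p'}(\Om)^N$. By the upper bound in \eqref{equivhs} one has $H^*(x,\nabla u_p)^{p-1}\le \tilde b^{\,p-1}\,\vert\nabla u_p\vert^{p-1}$, and since $u_p\in W^{1,p}(\Om)$ with $(p-1)p'=p$, this factor lies in $L^{p'}(\Om)$. For the remaining factor, the $1$-homogeneity of $H^*$ makes $\partial_\xi H^*(x,\cdot)$ positively $0$-homogeneous, and combining \eqref{fin_p3} with the non-degeneracy \eqref{equiv} gives $a\,\vert\partial_\xi H^*(x,\nabla u_p)\vert\le H(x,\partial_\xi H^*(x,\nabla u_p))=1$, hence the uniform bound $\vert\partial_\xi H^*(x,\nabla u_p)\vert\le 1/a$. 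Multiplying the two estimates yields $\Theta_p\in L^{p'}(\Om)^N$. The assertion $\Theta_p\cdot\mathbf n\in\mathcal M_b(\partial\Om)$ is precisely the Radon-measure conclusion of Proposition \ref{psolfinsp}, and \eqref{mf1} is the Green-type identity proved there.

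Next I would recover the interior equation. Testing \eqref{varsolp} with the admissible competitors $\xi=u_p\pm t\varphi$ for $\varphi\in\D(\R^N)$ supported in $\Om$ and $t>0$ small (these belong to $\W_{\phi,\psi}$ because they leave the boundary values of $u_p$ untouched) gives $\int_\Om\Theta_p\cdot\nabla\varphi\,\dd x=\int_\Om\varphi\rho\,\dd x$, i.e.\ $-\operatorname{div}(\Theta_p)=\rho$ in $\D'(\Om)$, which is the first line of \eqref{MKp}.

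It then remains to translate the support conditions into the three boundary lines. By the strict inequality \eqref{scomp00} the compact sets $\{u_p=\phi\}$ and $\{u_p=\psi\}$ are disjoint, and Proposition \ref{psolfinsp} gives $\supp(\Theta_p\cdot\mathbf n)\subset\{u_p=\phi\}\cup\{u_p=\psi\}$ together with $\supp((\Theta_p\cdot\mathbf n)^+)\subset\{u_p=\phi\}$ and $\supp((\Theta_p\cdot\mathbf n)^-)\subset\{u_p=\psi\}$. Restricting the Jordan decomposition to each set: on $\{u_p=\phi\}$ the negative part does not charge, so $\Theta_p\cdot\mathbf n=(\Theta_p\cdot\mathbf n)^+\ge 0$; symmetrically $\Theta_p\cdot\mathbf n\le 0$ on $\{u_p=\psi\}$; and on $\partial\Om\setminus(\{u_p=\phi\}\cup\{u_p=\psi\})=\{\phi<u_p<\psi\}$ neither part charges, so $\Theta_p\cdot\mathbf n=0$. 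These are exactly the boundary conditions of \eqref{MKp}, interpreted through the normal trace $\Theta_p\cdot\mathbf n$. Finally, uniqueness of $u_p$ is inherited from the strict convexity of $\F_p$ in Proposition \ref{psolfinsp}, and $\Theta_p$ together with its normal trace is determined by $u_p$. The main obstacle is the integrability/boundedness step for the anisotropic field $\Theta_p$, where the homogeneity of $H^*$ and the non-degeneracy \eqref{equiv} must be used carefully together with $u_p\in W^{1,p}(\Om)$; the rest is a direct reading of Proposition \ref{psolfinsp}.
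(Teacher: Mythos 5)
Your proposal is correct and takes essentially the same route as the paper: the paper's own proof of this corollary is a one-line reference back to the proof of Proposition \ref{psolfinsp}, treating \eqref{MKp} as a direct reformulation of \eqref{varsolp}, \eqref{mf1} and \eqref{boundarybehaviorp}, which is exactly your reading. The details you supply beyond that --- the bound $\vert\partial_\xi H^*(x,\nabla u_p)\vert\le 1/a$ from \eqref{fin_p3} and \eqref{equiv} giving $\Theta_p\in L^{p'}(\Omega)^N$, and the conversion of the support statements into pointwise sign conditions via the disjointness of $\{u_p=\phi\}$ and $\{u_p=\psi\}$ under \eqref{scomp00} --- are correct and fill in steps the paper leaves implicit.
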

	\begin{remark}
	In order to simplify the presentation we have assumed that $H^{*}(x,.)\in C^{1}(\R^n\setminus\{0\})$. However, we do believe that all the results of this paper remain true without this assumption and one needs just to replace the derivative of $H^*$ with respect to the second variable by the subdifferential.
	\end{remark}

	\section{Limits of Finsler $p$-Laplacian as $p\to\infty$}\label{limits}
The strategy is to obtain some uniform bounds in $p$ of $\nabla u_p$, then we show that the triplet $(u_p,\Theta_p, \Theta_p\cdot\mathbf{n} )$  converges (up to a subsequence) to optimal solutions of the corresponding Kantorovich-Rubinstein and Beckmann-type  problems. The following result gathers main estimates, that we will need later.

	\begin{proposition}[Main estimates]
		\label{prop:main_estimates}
	Assume \eqref{comp} is strict, that is,
	\begin{equation}
		\label{scomp0}
		\phi(x) - \psi(y) < d_H(y,x)~\hbox{for all}~x,y\in\partial\Omega.
	\end{equation}
	Then, we have
		\begin{enumerate}[label=(\roman*)]
			\item \label{item1} estimate on $u_p$
			\begin{equation}\label{estimate_up}
				\vert u_p(x) - u_p(y)\vert \leq C \vert x-y\vert^{r},~~\mbox{for all}~~x,y\in\Om; 
			\end{equation}
			
			\item \label{item2} estimates on $\Theta_p\cdot\mathbf{n}$:
			\begin{equation}\label{estimates_Thetapn}
				\int_{\partial\Omega} \dd(\Theta_p\cdot\mathbf{n})^{+} \leq C_1,~~\mbox{and}~~	\int_{\partial\Omega} \dd(\Theta_p\cdot\mathbf{n})^{-} \leq C_2;
			\end{equation}
			\item \label{item3} estimate on $\Theta_p$:
			\begin{equation}\label{estimate_Thetap}
				\int_\Omega \vert \Theta_p\vert \dd x \leq C,
			\end{equation}
		\end{enumerate}
		where $r, C, C_1, C_2$ are positive constants independent from $p$.
	\end{proposition}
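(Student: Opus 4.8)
The plan is to first establish a bound on $\big(\int_\Om H^*(x,\nabla u_p)^p\,\dd x\big)^{1/p}$ that is uniform in $p$, then to deduce \ref{item1} by a Morrey-type embedding and \ref{item2}--\ref{item3} from the weak formulation \eqref{mf1} together with the support localization \eqref{boundarybehaviorp}. For the energy bound I would test the minimization \eqref{vp} against a fixed competitor $w$: the compatibility \eqref{comp} guarantees a Finsler $1$-Lipschitz function $w$ (so $H^*(x,\nabla w)\le 1$ a.e.) with $\phi\le w\le\psi$ on $\partial\Om$ (for instance the maximal-volume solution itself). Minimality gives
\[
\int_\Om H^*(x,\nabla u_p)^p\,\dd x\le |\Om|+p\int_\Om (u_p-w)\rho\,\dd x,
\]
and I would control $\int_\Om(u_p-w)\rho$ by $\|\rho\|_{L^2}\|u_p-w\|_{L^2}$, estimating $\|u_p-w\|_{L^2}$ through a trace--Poincaré inequality, the bound $|\nabla u_p|\le\tilde a^{-1}H^*(x,\nabla u_p)$ from \eqref{equivhs}, and Hölder. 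This makes the right-hand side affine in $\big(\int_\Om H^*(x,\nabla u_p)^p\big)^{1/p}$ up to a polynomial factor in $p$, so taking $p$-th roots and absorbing yields $\big(\int_\Om H^*(x,\nabla u_p)^p\,\dd x\big)^{1/p}\le C$ uniformly. Fixing $q\in(N,\infty)$ and restricting to $p>q$, Hölder then bounds $\|\nabla u_p\|_{L^q}$ uniformly and Morrey's embedding $W^{1,q}(\Om)\hookrightarrow C^{0,1-N/q}(\Ob)$ gives \ref{item1} with $r=1-N/q$; combined with $\phi\le u_p\le\psi$ on $\partial\Om$, this also makes $(u_p)$ uniformly bounded in $L^\infty(\Om)$.

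For the remaining estimates I would exploit \eqref{mf1}. Taking $\eta=u_p$ and using Euler's identity \eqref{ehf} and the localization \eqref{boundarybehaviorp} yields
\[
\int_\Om H^*(x,\nabla u_p)^p\,\dd x=\int_\Om u_p\rho\,\dd x+\int_{\partial\Om}\phi\,\dd(\Theta_p\cdot\mathbf n)^+-\int_{\partial\Om}\psi\,\dd(\Theta_p\cdot\mathbf n)^-.
\]
Since $(u_p)$ is bounded in $L^\infty$, any $O(1)$ bound on the two boundary masses upgrades this identity to $\int_\Om H^*(x,\nabla u_p)^p\,\dd x\le C$; the pointwise estimate $|\Theta_p|\le\tilde b\,H^*(x,\nabla u_p)^{p-1}$ from \eqref{fin_p2} together with Hölder would then give \ref{item3}. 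Thus \ref{item3} reduces to \ref{item2}. To bound the masses I would test \eqref{mf1} with Finsler $1$-Lipschitz functions separating the two disjoint compact contact sets $\{u_p=\phi\}$ and $\{u_p=\psi\}$: the strict inequality \eqref{scomp0} should provide, by compactness, a $p$-independent gap $\delta_0>0$ between these sets in the metric $d_H$, hence a $1$-Lipschitz $\eta$ with $\eta\equiv 0$ on $\{u_p=\psi\}$ and $\eta\equiv\delta_0$ on $\{u_p=\phi\}$. By \eqref{boundarybehaviorp} the boundary pairing then isolates $\int\dd(\Theta_p\cdot\mathbf n)^+$, while the interior pairing $\int_\Om\Theta_p\cdot\nabla\eta$ is estimated through the Cauchy--Schwarz inequality \eqref{cs}; the normalization $\int_{\partial\Om}\dd(\Theta_p\cdot\mathbf n)=-\int_\Om\rho$ (take $\eta\equiv 1$ in \eqref{mf1}) controls the other mass.

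The delicate point is exactly this coupling: the interior pairing $\int_\Om\Theta_p\cdot\nabla\eta$ is a priori dominated only by $\int_\Om H^*(x,\nabla u_p)^{p-1}\,\dd x$, which is the flux quantity of \ref{item3}, so \ref{item2} and \ref{item3} are intertwined, and the naive Hölder bound on $\int_\Om H^*(x,\nabla u_p)^{p-1}$ degenerates as $O(p)$ because the energy $\int_\Om H^*(x,\nabla u_p)^p$ is itself only $O(p)$. Breaking this loop is where the one-sided sign structure of \eqref{boundarybehaviorp}, the uniform separation $\delta_0$ coming from \eqref{scomp0}, and ultimately the divergence-measure/normal-trace machinery behind \eqref{mf1} must be used together. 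I expect the decoupling of the boundary masses from the flux, rather than any single isolated inequality, to be the main obstacle.
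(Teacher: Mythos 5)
Your part \ref{item1} is essentially sound and close to the paper's own argument: the paper tests the minimization \eqref{vp} against $v(x)=\min_{y\in\partial\Omega}\{\psi(y)+d_H(y,x)\}$ and controls $\|u_p\|_{L^\infty}$ by a $p$-uniform Morrey inequality of Talenti applied to $(u_p-\max_{\partial\Omega}\psi)^+$ and $(u_p-\min_{\partial\Omega}\phi)^-$, where you use a trace--Poincar\'e inequality; both routes give $\int_\Omega H^*(x,\nabla u_p)^p\,\dd x\le Cp$, hence a uniform $L^q$ bound on $\nabla u_p$ for fixed $q>N$ and the H\"older estimate. Your reduction of \ref{item3} to \ref{item2} (take $\eta=u_p$ in \eqref{mf1}, use \eqref{ehf}, \eqref{boundarybehaviorp}, \eqref{fin_p2} and H\"older) is also exactly the paper's. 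But the proof of \ref{item2}, which you yourself identify as the crux, is left open: with your separating test function $\eta$, the interior pairing is controlled only by $\int_\Omega H(x,\Theta_p)\,\dd x=\int_\Omega H^*(x,\nabla u_p)^{p-1}\,\dd x$, and from the energy bound this is only $O(p)$, so you obtain $\int_{\partial\Omega}\dd(\Theta_p\cdot\mathbf n)^+=O(p)$ rather than $O(1)$. You state this obstacle explicitly but do not overcome it, so the central estimate of the proposition is not proved. (A smaller issue: the claimed $p$-independent $d_H$-gap between $\{u_p=\phi\}$ and $\{u_p=\psi\}$ does not follow from compactness of \eqref{scomp0} alone; it also needs the uniform H\"older bound \ref{item1} and the uniform gap $\min_{\partial\Omega}(\psi-\phi)>0$.)

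The missing idea in the paper is to use no auxiliary separating function at all, but to test with $u_p-v$, where again $v(x)=\min_{y\in\partial\Omega}\{\psi(y)+d_H(y,x)\}$. This choice does both jobs simultaneously. On the boundary, \eqref{scomp0} gives $\phi-v\le -C_1<0$ and $\psi-v\ge 0$ uniformly, so by \eqref{boundarybehaviorp} the boundary pairing yields the term $C_1\int_{\partial\Omega}\dd(\Theta_p\cdot\mathbf n)^+$ plus a contribution of favorable sign. In the interior, the pairing splits as $\int_\Omega\Theta_p\cdot\nabla u_p\,\dd x-\int_\Omega\Theta_p\cdot\nabla v\,\dd x$, where the first term equals $\int_\Omega H^*(x,\nabla u_p)^p\,\dd x$ by Euler's identity, and by \eqref{cs}, \eqref{fin_p3}, H\"older and Young,
\begin{equation}
\int_\Omega\Theta_p\cdot\nabla v\,\dd x\le\int_\Omega H^{*}(x,\nabla u_p)^{p-1}\,\dd x\le\frac{p-1}{p}\int_\Omega H^{*}(x,\nabla u_p)^{p}\,\dd x+\frac{|\Omega|}{p}.
\end{equation}
The dangerous $O(p)$ quantity $\int_\Omega H^{*}(x,\nabla u_p)^{p-1}\,\dd x$ is thus absorbed into the full energy, which survives with the coefficient $1/p\ge 0$ and can simply be dropped, leaving $C_1\int_{\partial\Omega}\dd(\Theta_p\cdot\mathbf n)^+\le C_2+|\Omega|/p$. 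This is precisely the decoupling you were looking for: the loop between \ref{item2} and \ref{item3} is broken because $u_p$ itself appears in the test function, recreating the energy with a good sign against which the flux term is Young-absorbed. The bound on $(\Theta_p\cdot\mathbf n)^-$ follows symmetrically with $w(x)=\max_{y\in\partial\Omega}\{\phi(y)-d_H(y,x)\}$, and then \ref{item3} follows as you describe.
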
	
	\begin{proof} First, we prove \ref{item1}. Define $v(x) = \min_{y\in\partial\Omega} \psi(y) + d_H (y, x)$. Regarding the compatibility condition \eqref {comp}, we have $\phi \leq v \leq \psi $ on $\partial\Omega$. It is not difficult to see that $v$ is $1-$Lipschitz with respect to $d_H $ and  equivalently (see e.g. \cite[Proposition 2.1]{ennaji2020augmented}), we have that $H ^*(x,\nabla v(x)) \leq 1$  a.e. in $\Omega.$  Using the fact that $u_p$ is a minimizer of $\F_p$, we have
		\begin{equation}
			\label{ee1}
			\int_{\Omega} \frac{H ^*(x,\nabla u_p)^p}{p} \dd x - \int_{\Omega} u_p \rho \dd x \leq \int_{\Omega} \frac{H ^*(x,\nabla v) ^p }{p} \dd x - \int_{\Omega} v \rho \dd x\leq \frac{\vert \Omega\vert }{p} - \int_{\Omega} v \rho \dd x.
		\end{equation}
		Thanks to Theorem 2.E in \cite{talenti1994inequalities}, there is a Morrey-type inequality independent of $p$
		\begin{equation}
			\|u\|_{L^\infty(\Omega)}\le C_\Omega \|\nabla u\|_{L^p(\Omega)} \text{ for any } u\in W^{1,p}_0(\Omega),~p>N+1,
		\end{equation}
		where the constant $C_\Omega$ does not depend on $p$ and $u$. 
		Observing that we can apply the above inequality to $(u_p - \max_{\partial\Omega} \psi)^{+}$ and $(u_p - \min_{\partial\Omega}\phi)^{-}$ which are in $W^{1,p}_{0}(\Omega)$ to obtain
		\[
		\Vert u_p^+\Vert_{L^{\infty}(\Omega)} \leq C_\Omega\Vert\nabla u_p\Vert_{L^p(\Omega)} + \vert \max_{\partial\Omega}\psi\vert,
		\]
		and
		\[
		\Vert u_p^-\Vert_{L^{\infty}(\Omega)} \leq C_\Omega\Vert\nabla u_p\Vert_{L^p(\Omega)} + \vert \min_{\partial\Omega}\phi\vert.
		\]
		So $$\Vert u_p\Vert_{L^{\infty}(\Omega)} \leq C_1 \Vert\nabla u_p\Vert_{L^p(\Omega)} + C_2.$$ From \eqref{ee1} and the preceding inequality we deduce that
		\[
		\int_{\Omega} \frac{H ^*(x,\nabla u_p) ^p }{p} \dd x  \leq \frac{\vert \Omega\vert }{p} - \int_{\Omega} v \rho \dd x + \int_{\Omega} u_p \rho \dd x \leq C_3(1 + \Vert\nabla u_p\Vert_{L^p(\Omega)}),
		\]
		where $C_3$ is a positive constant not depending on $p$. Combining this with  \eqref{equiv}, we get 
		\begin{equation}\label{est00}
			\Vert H ^* (x,\nabla u_p)\Vert_{L^p(\Omega)}^{p}\leq  C_4p (1 +\Vert H ^* (x,\nabla u_p)\Vert_{L^p(\Omega)})
		\end{equation}
		which implies that 
		\begin{equation}
			\label{est1}
			\Vert H ^* (x,\nabla u_p)\Vert_{L^p(\Omega)}\leq (C_5p)^{\frac{1}{p-1}}
		\end{equation}
		for some constant $C_5$ independent from $p$. Again, by \eqref{equiv}, we get
		\begin{equation}
			\label{est2}
			\Vert \nabla u_p\Vert_{L^p(\Omega)} \leq C_6.
		\end{equation}
		Now take some $N<m\leq p$. Then by Hölder's inequality
		\begin{equation}
			\label{est3}
			\Vert \nabla u_p\Vert_{L^{m}(\Omega)} \leq \vert\Omega\vert^{\frac{p-m}{pm}} \Vert \nabla u_p\Vert_{L^{p}(\Omega)}.
		\end{equation}
		Thanks to \eqref{est2}, \eqref{est3} and the Morrey-Sobolev embedding from $W^{1,m}(\Omega)$ to H\"older spaces,
		\begin{equation}\label{est4}
			\vert u_p(x) - u_p(y)\vert \leq C_7 \vert x-y\vert^{1 - \alpha}
		\end{equation}
		with $\alpha = \frac{N}{m}$. 
		\vspace{0,2cm}
		
		Now, let us prove \ref{item2}. We consider as before $v(x) = \min_{y\in\partial\Omega} \psi(y) + d_H (y,x)$. We have
		\[
		\int_{\partial\Omega} (u_p - v) \dd (\Theta_p\cdot\mathbf{n})= \int_\Omega \Theta_p\cdot \nabla(u_p - v) \dd x - 	\int_\Omega (u_p - v) \rho \dd x.
		\]
		In other words
		\[
		\int_\Omega (u_p - v) \rho \dd x   =  \int_\Omega \Theta_p\cdot \nabla(u_p - v) \dd x  + \int_{\{u_p = \psi\}} (\psi - v) \dd(\Theta_p\cdot\mathbf{n})^{-} - \int_{\{u_p = \phi\}} (\phi - v) \dd (\Theta_p\cdot\mathbf{n})^{+}.
		\]
		We see that $\phi < v \leq \psi$ on $\partial\Omega$ so that $\psi - v \geq 0$ and $\phi - v <0$, thus $\phi - v < -C_1$ for some positive constant $C_1$. So we obtain
		
		\begin{equation}
			\label{eqs3}
			\int_{\Omega}\Theta_p\cdot\nabla u_p \dd x + C_1 \int_{\partial\Omega} \dd(\Theta_p\cdot\mathbf{n})^{+} \leq \int_\Omega (u_p - v) \rho \dd x  + \int_{\Omega} \Theta_p\cdot \nabla v \dd x.
		\end{equation}
		Since $H ^*$ is a Finsler metric, we have by Euler's homogeneous function theorem (see \textit{e.g.} \cite{nesterov2013introductory})  that $\partial_\xi H ^{*}(x,\xi)\cdot \xi =  H  ^*(x,\xi)$ for any $\xi\in\R^N$. Thus 
		\[
		\int_{\Omega}\Theta_p\cdot\nabla u_p \dd x = 	\int_{\Omega} H ^*(x,\nabla u_p)^{p-1} \partial_{\xi} H ^{*}(x,\nabla u_p)\cdot\nabla u_p\dd x   = \int_{\Omega} H ^*(x,\nabla u_p)^{p} \dd x .
		\]
		Using this fact in \eqref{eqs3}, we get
		\[
		\int_{\Omega} H ^*(x,\nabla u_p)^{p} \dd x + C _1\int_{\partial\Omega} \dd(\Theta_p\cdot\mathbf{n})^{+} \leq C_2 + \int_{\Omega} \Theta_p\cdot \nabla v \dd x,
		\]
		where $C_2 >0$ is independent from $p$.
		On the other hand, thanks to \eqref{cs} we have
		\[
		\begin{aligned}
			\int_{\Omega} \Theta_p\cdot \nabla v \dd x &\leq \int_{\Om} H(x,\Theta_p)H^{*}(x,\nabla v) \dd x\\
			&= \int_{\Om} H(x,H ^{*}(x,\nabla u_p) ^{p-1} \partial_{\xi}H ^{*}(x,\nabla u_p))H^{*}(x,\nabla v)  \dd x\\
			&=\int_{\Om} H ^{*}(x,\nabla u_p) ^{p-1} H(x,\partial_{\xi}H ^{*}(x,\nabla u_p))H^{*}(x,\nabla v)  \dd x\\
			&=\int_{\Om} H ^{*}(x,\nabla u_p) ^{p-1} H^{*}(x,\nabla v) \dd x, 
		\end{aligned}
		\]
		where we have used the homogeneity of $H$ and \eqref{fin_p3}. Using Hölder and Young's inequalities and the fact that $H^{*}(x,\nabla v) \leq 1$ a.e., we get
		\[
		\begin{aligned}
			\int_{\Om} H ^{*}(x,\nabla u_p) ^{p-1} H^{*}(x,\nabla v) \dd x &\leq \Big( 	\int_{\Om} H ^{*}(x,\nabla u_p) ^{(p-1)p^{'}}\dd x\Big)^{\frac{1}{p^{'}}}\vert\Om\vert^{\frac{1}{p}}\\
			&\le \frac{p-1}{p}\int_{\Om} H ^{*}(x,\nabla u_p) ^p \dd x +\frac{1}{p}\vert\Om\vert.
		\end{aligned}
		\]
		We deduce that
		\begin{equation}
			\frac{1}{p}\int_{\Om} H ^{*}(x,\nabla u_p) ^p \dd x + C_1 \int_{\partial\Omega} \dd(\Theta_p\cdot\mathbf{n})^{+} \leq C_2 +\frac{1}{p}\vert\Om\vert.
		\end{equation}
		Therefore 
		\begin{equation}
			\label{est_thetan1}
			\int_{\partial\Omega} \dd(\Theta_p\cdot\mathbf{n})^{+} \leq C_3
		\end{equation}
		for some positive constant $C_3$ independent of $p$. Set $w(x) = \max_{y\in\partial\Omega} \phi (y) - d_H ( y,x)$. Observe that $ \phi \leq w <\psi$ and following the same lines we get that 
		\begin{equation}
			\label{est_thetan2}
			\int_{\partial\Omega} \dd(\Theta_p\cdot\mathbf{n})^{-} \leq C_4.
		\end{equation}
		
		As for $\Theta_p$, we have
		\[
		\int_{\Omega} H ^*(x,\nabla u_p)^{p} \dd x   = \int_{\Omega}\Theta_p\cdot\nabla u_p \dd x = \int_{\partial\Om} u_p\dd(\Theta_p\cdot\mathbf{n}) + \int_{\Om} u_p\rho \dd x.
		\]
		Keeping in mind \eqref{est_thetan1} and \eqref{est_thetan2}, Hölder's inequality gives
		\begin{equation}\label{phi1}
			\int_{\Omega} H ^*(x,\nabla u_p)^{p-1}\dd x  \leq C_5,
		\end{equation}
		this proves \ref{item3}. 
	\end{proof}

Thanks to Proposition \ref{prop:main_estimates}, we can state the main result.
\begin{theorem}\label{thm:main_results}
	  Let $u_p$ be a minimizer of $\F_p$. Then, up to a subsequence, $u_p\rightrightarrows \uu$ on $\Ob,$ where  $\uu$ solves the following variant of Kantorovich-Rubinstein problem
	  \begin{equation}
	  	(\mathcal{KR})_H :~\max \Big\{ \int_{\Omega} u \dd \rho:~H ^{*}(x,\nabla u )\leq 1~\text{a.e.} ,~\phi\leq u \leq \psi~\mbox{on}~\partial\Omega\Big \}.
	  \end{equation}
	 Moreover, 	there exists a couple $(\Theta,\theta) \in\M_b(\Omega)^N\times \M_b(\partial\Omega),$   such that
		\begin{enumerate}[label=(\roman*)]
			\item    Up to a subsequence 
		$$(\Theta_p, \Theta_p\cdot\mathbf{n})  \rightharpoonup (\Theta,\theta) \quad \hbox{ in }\M_b(\Omega)^N\times \M_b(\partial\Omega)-\hbox{weak}^*.$$
		
		\item   $(\Theta,\theta)$   solves the Beckmann problem 
		 	\begin{equation}
			\B_H : \min_{\substack{\Phi\in\M_b(\Omega)^N \\ \nu\in\M_b(\partial\Omega)}}\left \{ \int_{\Omega} H (x,\frac{\Phi}{\vert\Phi\vert}) \dd\vert\Phi\vert + \int_{\partial\Omega} \psi \dd\nu^- - \int_{\partial\Omega} \phi \dd\nu^+ 
			:~-\operatorname{div}(\Phi) = \rho + \nu~\mbox{in}~\D ^{'}(\R^N)\right\}.
		\end{equation}
	
			\item\label{iii} The couple   $(\uu,\Theta)$   solves  the PDE 
		\begin{equation} \label{MKpde1}
		\left\{
		\begin{array}{ll}
			-\operatorname{div}(\Theta) =  \rho  \quad  & \mbox{ in }\Omega \\  \\ 
			\Theta(x)\cdot\nabla  \uu(x) = H\left(x, \Theta \right) & \mbox{ in }\Omega \\  \\ 
			\phi\leq \uu\leq \psi  & ~~\mbox{on}~~\partial\Om,
		\end{array}
		\right.
	\end{equation} 		
			in the following sense:  $(\uu,\Theta) \in \W_{\phi,\psi} \times    \M_b(\Omega)^N,$  $ \Theta    \cdot\mathbf{n}= \theta   \in  \M_b(\partial\Omega) ,$   
				\begin{equation} \label{HThetau}
		 \frac{\Theta}{\vert\Theta\vert}  \cdot\nabla_{\vert \Theta \vert } \uu  = H\left(.,\frac{\Theta}{\vert\Theta\vert} \right) ,\quad \vert \Theta\vert -\hbox{a.e. in } \Omega,   
	 		\end{equation} 
 			\begin{equation}  
 			\supp(\theta ^{+})\subset  \{\uu  = \phi\}\quad    \mbox{and}\quad  \supp(\theta^{-})\subset\{\uu  = \psi\},
 		\end{equation} 
 	and 
			\begin{equation}
				\label{mf}
				\int_{\Om} \Theta \cdot\nabla \eta\:  \dd x = \int_{\Om} \eta\rho\:  \dd x+ 	\int_{\partial\Om}\eta \:  \dd \theta ~~\mbox{for all}~\eta\in W^{1,\infty}(\Om). 
			\end{equation}	    			
			\end{enumerate}
\end{theorem}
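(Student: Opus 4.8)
The plan is to run the Evans--Gangbo scheme: harvest the uniform-in-$p$ bounds of Proposition \ref{prop:main_estimates} to extract limits, identify the limit of $u_p$ as a maximizer of $(\mathcal{KR})_H$ directly from the variational structure, and then read off the Beckmann optimality and the system \eqref{MKpde1} from a duality computation. First I would fix the compactness. The H\"older bound \eqref{estimate_up} makes $(u_p)$ equicontinuous and uniformly bounded, so by Arzel\`a--Ascoli a subsequence satisfies $u_p\rightrightarrows\uu$ on $\Ob$; fixing any $m>N$, the $L^m$ gradient bound underlying \eqref{estimate_up} also gives $\nabla u_p\rightharpoonup\nabla\uu$ weakly in $L^m(\Om)^N$. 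The mass bounds \eqref{estimates_Thetapn} and \eqref{estimate_Thetap} with Banach--Alaoglu yield, along a further subsequence, $\Theta_p\rightharpoonup\Theta$ in $\M_b(\Om)^N$ and $\Theta_p\cdot\mathbf{n}\rightharpoonup\theta$ in $\M_b(\partial\Om)$, which is item (i).

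Next I would identify $\uu$. Admissibility $\phi\le\uu\le\psi$ on $\partial\Om$ is immediate from uniform convergence, while $H^*(x,\nabla\uu)\le1$ a.e. follows by testing: weak $L^m$ lower semicontinuity of the convex functional $w\mapsto\int_\Om H^*(x,w)^m\,\dd x$ gives $\int_\Om H^*(x,\nabla\uu)^m\le\liminf_p\int_\Om H^*(x,\nabla u_p)^m$, and H\"older combined with \eqref{est1} bounds the right-hand side by $|\Om|$ as $p\to\infty$ with $m$ fixed; letting $m\to\infty$ forces $H^*(x,\nabla\uu)\le1$. Optimality for $(\mathcal{KR})_H$ is then elementary: any competitor $z$ admissible for $(\mathcal{KR})_H$ lies in $\W_{\phi,\psi}$ and satisfies $\F_p(u_p)\le\F_p(z)\le|\Om|/p-\int_\Om z\rho$; dropping the nonnegative gradient term of $\F_p(u_p)$ and letting $p\to\infty$ yields $\int_\Om z\rho\le\int_\Om\uu\rho$, so $\uu$ maximizes $(\mathcal{KR})_H$.

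I would then pass to the limit in \eqref{mf1} tested against $\eta\in C^1(\Ob)$: the weak-$*$ convergences of $\Theta_p$ and of $\Theta_p\cdot\mathbf{n}$ produce exactly \eqref{mf}, so $-\operatorname{div}(\Theta)=\rho$ in $\Om$, the normal trace of $\Theta$ is $\theta$, and $(\Theta,\theta)$ is admissible for $\mathcal B_H$ with $\nu=\theta$. The support conditions are obtained by localization rather than by passing supports through the limit: if $\uu(x_0)>\phi(x_0)$ then $u_p>\phi$ near $x_0$ for large $p$, so \eqref{boundarybehaviorp} forces $(\Theta_p\cdot\mathbf{n})^+=0$ there, i.e. $\Theta_p\cdot\mathbf{n}\le0$ on a fixed neighborhood; since weak-$*$ limits of nonpositive measures are nonpositive, $\theta\le0$ there and $x_0\notin\supp(\theta^+)$. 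Hence $\supp(\theta^+)\subset\{\uu=\phi\}$, and symmetrically $\supp(\theta^-)\subset\{\uu=\psi\}$.

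Finally I would close the duality. Weak duality $\sup(\mathcal{KR})_H\le\inf\mathcal B_H$ comes from \eqref{cs} and the constraints $\phi\le u\le\psi$. For the reverse, the support conditions turn the boundary part of $\mathcal B_H(\Theta,\theta)$ into $-\int_{\partial\Om}\uu\,\dd\theta$, which by \eqref{mf} with $\eta=\uu$ equals $\int_\Om\uu\rho-\int_\Om\nabla\uu\cdot\dd\Theta$; thus $\mathcal B_H(\Theta,\theta)=\int_\Om H(x,\Theta/|\Theta|)\,\dd|\Theta|-\int_\Om\nabla\uu\cdot\dd\Theta+\int_\Om\uu\rho$. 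Reshetnyak lower semicontinuity bounds the first term by $\liminf_p\int_\Om H^*(x,\nabla u_p)^{p-1}$ via \eqref{fin_p3}, while the finite-$p$ identity $\int_{\partial\Om}u_p\,\dd(\Theta_p\cdot\mathbf{n})=\int_\Om H^*(x,\nabla u_p)^p-\int_\Om u_p\rho$ shows $\lim_p\int_\Om H^*(x,\nabla u_p)^p$ exists. These reconcile through the elementary estimate $t^{p-1}(1-t)\le e^{-1}/p$ on $[0,1]$ and $t^{p-1}(1-t)\le0$ for $t\ge1$, whence $\limsup_p\int_\Om H^*(x,\nabla u_p)^{p-1}\bigl(1-H^*(x,\nabla u_p)\bigr)\le0$; combining gives $\mathcal B_H(\Theta,\theta)\le\int_\Om\uu\rho=\sup(\mathcal{KR})_H$. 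With weak duality this closes the chain, proving $(\Theta,\theta)$ solves $\mathcal B_H$ with no gap and forcing $\int_\Om\nabla\uu\cdot\dd\Theta=\int_\Om H(x,\Theta/|\Theta|)\,\dd|\Theta|$; since \eqref{cs} holds $|\Theta|$-a.e., equality of the integrals upgrades to the pointwise relation \eqref{HThetau}, and assembling the ingredients yields \eqref{MKpde1}. The main obstacle I anticipate is the measure-theoretic meaning of $\int_\Om\nabla\uu\cdot\dd\Theta$ and of \eqref{cs} at the $|\Theta|$-a.e. level when $\Theta$ is singular: this requires the tangential-gradient calculus for measures (in the spirit of Bouchitt\'e--Buttazzo--Seppecher) to define $\nabla_{|\Theta|}\uu$, to transfer $H^*(x,\nabla\uu)\le1$ to $H^*(x,\nabla_{|\Theta|}\uu)\le1$ $|\Theta|$-a.e., and to validate \eqref{mf} for Lipschitz $\eta$; the Reshetnyak step additionally uses continuity of $H$ in $x$.
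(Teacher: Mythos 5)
Your argument correctly reproduces the Evans--Gangbo scheme that the paper itself follows, and in the regime where your ingredients apply it is sound: the compactness step, the identification of $\uu$ as a maximizer of $(\mathcal{KR})_H$ (your appeal to weak lower semicontinuity of $w\mapsto\int_\Om H^*(x,w)^m\,\dd x$ replaces the paper's explicit use of Mazur's lemma, to the same effect), the localization argument for $\supp(\theta^{+})$ and $\supp(\theta^{-})$ (which is in fact more explicit than the paper's one-line ``passing to the limit''), and the closing duality chain (your elementary bound $t^{p-1}(1-t)\le e^{-1}/p$ plays the role of the paper's H\"older/Young manipulations) all go through, modulo the tangential-gradient technicalities that both you and the paper delegate to the Bouchitt\'e--Buttazzo--Seppecher framework.

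The gap is that every finite-$p$ ingredient you invoke --- the fact that $\Theta_p\cdot\mathbf{n}$ is a Radon measure, the uniform bounds \eqref{estimates_Thetapn}, and the support conditions \eqref{boundarybehaviorp} --- is established in Propositions \ref{psolfinsp} and \ref{prop:main_estimates} only under the \emph{strict} compatibility condition \eqref{scomp00}/\eqref{scomp0}, namely $\phi(x)-\psi(y)<d_H(y,x)$ for all $x,y\in\partial\Om$, whereas the theorem is asserted under the non-strict condition \eqref{comp}. Strictness is not cosmetic: it is what makes the contact sets $\{u_p=\phi\}$ and $\{u_p=\psi\}$ disjoint compact sets, which is the mechanism by which $\Theta_p\cdot\mathbf{n}$ is shown to be a measure, and it yields the uniform gap $\phi-v<-C_1$ used to control $(\Theta_p\cdot\mathbf{n})^{+}$; if $\phi(x)-\psi(y)=d_H(y,x)$ somewhere, none of this is available, so one cannot simply run your argument on the $p$-problems. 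The paper therefore proves the theorem in two stages: first the strict case (which is, in substance, what you proved), and then the general case by approximating $\phi,\psi$ uniformly by strictly compatible obstacles $\phi_n,\psi_n$, invoking the strict case for each $n$ to obtain solutions $(\uu_n,\Theta_n,\theta_n)$, re-deriving the estimates uniformly in $n$ (using the barriers $v_n=\min_{y\in\partial\Om}\{\psi_n(y)+d_H(y,\cdot)\}$ and $w_n=\max_{y\in\partial\Om}\{\phi_n(y)-d_H(y,\cdot)\}$), and passing to the limit $n\to\infty$ in the optimality identities. Your proposal omits this second stage entirely, so as written it establishes the theorem only under the additional hypothesis of strict compatibility, not in the generality claimed.
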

			\begin{proof} \textbf{The case where the inequality \eqref{comp} is strict.}
				
		 First, we see that thanks to \eqref{estimate_up}, we have by Ascoli-Arzelà's theorem, up to a subsequence, $u_p\rightrightarrows \uu$ on $\Ob$ for some continuous function $\uu$ satisfying $\phi \leq \uu \leq \psi$ on $\partial\Om$.  It is clear that $\uu\in W^{1,\infty}(\Omega)$.
			
			We are now in a position to show that $\uu$ solves $\KR$. To do so, we take any $v\in \W_{\phi,\psi}$ such that $H^*(x,\nabla v (x))\le 1$ a.e.. Using the optimality of $u_p$ we see that
			\[
			-\int_{\Omega} u_p \rho\dd x\leq \F_p(u_p) \leq \F_p(v) \leq \frac{\vert\Omega\vert}{p} - \int_{\Omega} v \rho\dd x.
			\]
			Taking the limit up to a subsequence, we get
			\[
			\sup\Big\{\int_{\Omega} v \rho\dd x :~H ^*(x,\nabla v) \leq 1,~\mbox{a.e.},~\phi\leq v \leq \psi~\mbox{on}~\partial\Omega\Big\}\leq \int_{\Omega} \uu \rho \dd x.
			\]
			It remains to show that $\uu$ is $1-$Lipschitz with respect to $d_H $, that is, $H^*(x,\nabla \uu (x))\le 1$ a.e.. Recall that $\phi\leq \uu \leq \psi$ on $\partial\Omega$. Again, using \eqref{est1}, we consider $N<m\leq p$ and we use Hölder's inequality to get
			\[
			\Vert H ^*(x,\nabla u_p)\Vert_{L^m(\Omega)} \leq (C_5p) ^{\frac{1}{p-1}}\vert\Omega\vert^{\frac{p-m}{pm}}.
			\]
			Since $u_{p}\rightrightarrows \uu$ uniformly in $\Ob$,  we can assume that up to a subsequence $u_{p}\rightharpoonup\uu$ weakly in $W^{1,m}(\Omega)$, and particularly, $\nabla u_{p}\rightharpoonup \nabla\uu$ weakly in $L^{m}(\Omega,\R^N)$. Mazur's lemma (see \cite{ekeland1999convex} for example) ensures the existence of a convex combination of $\nabla u_{p_k}$ converging in norm toward $\nabla \uu$. More precisely, there exists $\{U_i\}$  such that
			\[
			U_i= \sum_{k=i}^{n_i}\alpha_{k}^{i}\nabla u_{p_k}
			\]
			where $\sum_{k=i}^{n_i}\alpha_{i}^{k} = 1$, and $\alpha_{k} ^{i}\geq 0,~i\leq k\leq n_i$ and $\Vert U_i - \nabla\uu \Vert_{L^m(\Omega)}\to 0$ as $i\to +\infty$.
			Since $H ^*$ is continuous, we have
			\begin{equation*}
				\begin{aligned}
					\Vert H ^*(x,\nabla\uu)\Vert_{L^m(\Omega)} &\leq  \liminf_{i\to\infty}\Vert H  ^{*}(x,\sum_{k=i}^{n_i}\alpha_{k}^{i}\nabla u_{p_k})\Vert_{L^m(\Omega)}\\
					&\leq  \liminf_{i\to\infty}\sum_{k=i}^{n_i}\alpha_{k}^{i}\Vert H ^{*}(x,\nabla u_{p_k})\Vert_{L^m(\Omega)}\\
					&\leq \liminf_{i\to\infty}\sum_{k=i}^{n_i}\alpha_{k}^{i} (C_5p_k) ^{\frac{1}{p_k-1}}\vert\Omega\vert^{\frac{p_k-m}{m p_k}} = |\Omega|^{\frac{1}{m}}.
				\end{aligned}
			\end{equation*}
Taking $m\to \infty$, we get $H^*(x,\nabla u(x))\leq 1,$ a.e. $x\in \Omega.$  On the other hand, we see that \eqref{estimate_Thetap} and \eqref{estimates_Thetapn} implies that 
		 $\Theta_p$  and $\Theta_p\cdot\mathbf{n}$ are bounded in $\M_b(\Ob)$ and $\M_b(\partial\Omega)$ respectively. As a consequence, there exists $\Theta\in \M_b(\Ob)^N$ and $\theta \in\M_b(\partial\Omega)$ such that up to a subsequence 
		 \[
		 \Theta_p\rightharpoonup \Theta~\mbox{weakly* as}~p\to \infty,	
		 \]
		 and
		 \[
		 \Theta_p\cdot\mathbf{n}\rightharpoonup \theta~\mbox{weakly* as}~p\to \infty. 
		 \]
	Next, take any admissible potential $v\in C^1(\Omega)$ for $\KR$ and an admissible couple of flows $(\Psi,\nu)\in \M_b(\Omega)^N\times \M_b(\partial\Omega) $ for $\B_H$. Since $H^*(x,\nabla v)\leq 1$ for $\text{a.e. } x\in \Omega$, we have
\begin{align*}
	\int_{\Omega} H(x,\frac{\Psi}{\vert\Psi\vert}) \dd\vert\Psi\vert &\geq \int_{\Omega} H(x,\frac{\Psi}{\vert\Psi\vert})  H^{*}(x,\nabla v)\dd\vert\Psi\vert \\
	&\geq  \int_{\Omega}\frac{\Psi}{\vert\Psi\vert} \nabla v \dd\vert\Psi\vert \\
	& \geq  \int_{\Omega} v \dd\rho + \int_{\partial\Omega} \phi \dd\nu^+ - \int_{\partial\Omega} \psi \dd\nu^-
\end{align*}
and consequently
\[
\int_{\Omega} H(x,\frac{\Psi}{\vert\Psi\vert}) \dd\vert\Psi\vert+ \int_{\partial\Omega} \psi \dd\nu^- - \int_{\partial\Omega} \phi \dd\nu^+\geq \int_{\Omega} v \dd\rho.
\]
In particular, this implies that  
\begin{equation}
\min\B_H\geq \max\KR.
\end{equation}   
On the other hand, using Hölder's inequality combined with \eqref{ehf}-\eqref{fin_p2}, we get
\begin{align}
	\int_\Omega H(x,\frac{\Theta}{\vert\Theta\vert}) \dd\vert\Theta\vert &\leq \liminf_p\int_\Omega H\Big(x,H ^{*}(x,\nabla u_p) ^{p-1} \partial_{\xi}H ^{*}(x,\nabla u_p)\Big) \dd x\\
	& = \liminf_p \int_\Omega H ^{*}(x,\nabla u_p) ^{p-1}  H(x,\partial_{\xi}H ^{*}(x,\nabla u_p)) \dd x\\
	&\leq \liminf_p\Big(\int_\Omega   H ^{*}(x,\nabla u_p) ^{p} \dd x\Big) ^{\frac{p-1}{p}}\\
	& = \liminf_p \Big(\int_\Omega   H ^{*}(x,\nabla u_p) ^{p-1}~\partial_{\xi}H ^{*}(x,\nabla u_p)\cdot\nabla u_p \dd x\Big) ^{\frac{p-1}{p}}\\
	& =  \liminf_p \Big(\int_\Omega  \nabla u_p \dd\Theta_p\Big) ^{\frac{p-1}{p}}\\
	& =  \liminf_p \Big(\int_\Omega  u_p \rho \dd x + \int_{\partial\Om}u_p\dd(\Theta_p\cdot\mathbf{n})\Big) ^{\frac{p-1}{p}}\\
	& =  \int_\Omega  \uu \rho \dd x +\int_{\partial\Om}\phi\dd\theta^{+} -  \int_{\partial\Om}\psi\dd\theta^{-}.
\end{align}
This implies that 
\begin{equation}
 \min\B_H\ \leq 	\int_\Omega H(x,\frac{\Theta}{\vert\Theta\vert}) \dd\vert\Theta\vert  -\int_{\partial\Om}\phi\dd\theta^{+}   +    \int_{\partial\Om}\psi\dd\theta^{-} \leq   \int_\Omega  \uu \rho \dd x =   \max\KR.
\end{equation}
Thus 
 \begin{equation}
	\min\B_H\ = 	\int_\Omega H(x,\frac{\Theta}{\vert\Theta\vert}) \dd\vert\Theta\vert  -\int_{\partial\Om}\phi\dd\theta^{+}   +    \int_{\partial\Om}\psi\dd\theta^{-} =   \int_\Omega  \uu \rho \dd x =   \max\KR,
\end{equation}
which implies the optimality  of $\bf{u}$ and $(\Phi, \theta)$.

\textbf{Now it remains to show the results for the general case where the inequality \eqref{comp} needs not to be strict.}

We proceed by approximations. Consider two sequences $\{\phi_n\}_n$ and $\{\psi_n\}_n$  of continuous functions on $\partial\Om$ such that
\[
\phi_n(x) - \psi_n(y) < d_{H}(y,x)~\mbox{for all}~x,y\in\partial\Om,
\]
and
\[
\phi_n\rightrightarrows \phi~~\mbox{and}~~\psi_n\rightrightarrows \psi~\mbox{on}~\partial\Om.
\]
Then, thanks to the previous case, there exists a sequence of $\{\uu_n\}_n \in \W_{\phi_n,\psi_n}$ such that $H^{*}(x,\nabla \uu_n)\leq 1~\mbox{a.e}~\Om$. In addition, consider the corresponding solutions to the Beckmann problem $(\Theta_n,\theta_n)$. We then have
\begin{equation}
	\label{opt_n}
\int_\Om \uu_n \dd\rho =  \int_{\Omega} H (x,\frac{\Theta_n}{\vert\Theta_n\vert}) \dd\vert\Theta_n\vert  -\int_{\partial\Om}\phi_n\dd\theta_{n}^{+}   +    \int_{\partial\Om}\psi_n\dd\theta_{n}^{-} = \min\B_H.
\end{equation}
Then we deduce by the previous arguments that
\[
\uu_n\rightrightarrows \uu~\mbox{uniformly in}~\Ob~\mbox{with}~~H^{*}(x,\nabla \uu)\leq 1~\mbox{a.e. and}~~\phi\leq \uu \leq \psi~\mbox{in}~\partial\Om. 
\]
Next, we follow the main ideas of the proof of Proposition \ref{prop:main_estimates}. Define $v_n(x) = \min_{y\in\partial\Om}\left\{\psi_n(y)+d_{H}(y,x)\right\}$. Then
\begin{equation}
	\label{large:ineq1}
\int_{\Om}\Theta_n\cdot\nabla \uu_n \dd x + C_1 \int_{\partial\Om} \dd \theta_{n}^{+} \leq \int_{\Om} (\uu_n - v_n)\rho\dd x + \int_{\Om}\Theta_n\cdot \nabla v_n \dd x,
\end{equation} 
where $C_1$ is a positive constant independent from $n$. Using \eqref{MKpde1}, we have
\[
\int_{\Om}\Theta_n\cdot\nabla \uu_n \dd x = \int_{\Om} H(x,\frac{\Theta_n}{\vert\Theta_n\vert}) \dd \vert\Theta_n\vert.
\] 
On the other hand, since $H^{*}(x,\nabla v_n(x)) \leq 1~\mbox{a.e}$, we get
 \[
\int_{\Om}\Theta_n\cdot\nabla v_n \dd x \leq \int_{\Om} H(x,\Theta_n)H^{*}(x,\nabla v_n) \dd x \leq \int_{\Om}H(x,\frac{\Theta_n}{\vert\Theta_n\vert}) \dd \vert \Theta_n \vert.
\]
Combining these facts in \eqref{large:ineq1}, and using \eqref{equiv} we get
\begin{equation}
	\label{large:est_theta1}
	 \int_{\partial\Om} \dd \theta_{n}^{+} \leq C, ~\mbox{with}~C>0.
\end{equation}
Similarly, working with $w_n(x) = \max_{y\in\partial\Omega} \phi_n(y) - d_H ( y,x)$ instead of $v_n$, we get
\begin{equation}
	\label{large:est_theta2}
	\int_{\partial\Om} \dd \theta_{n}^{-} \leq C, ~\mbox{with}~C>0.
\end{equation}
As for $\Theta_n$, we deduce from \eqref{equiv}, \eqref{opt_n}, \eqref{large:est_theta1} and \eqref{large:est_theta2} that
\[
 \int_{\Omega} \vert\Theta_n\vert \dd x \leq C.
\]
Then, up to a subsequence, $(\Theta_n,\theta_n)\rightharpoonup (\Theta,\theta)~\mbox{weakly* as}~n\to\infty$. Thus, passing to the limit in \eqref{opt_n}, the proof is complete.

Finally, for the proof of the last item \ref{iii}, by passing to the limit, we recover the conditions  			\begin{equation}
	\supp(\theta ^{+})\subset  \{\uu  = \phi\}\quad    \mbox{and}\quad  \supp(\theta^{-})\subset\{\uu  = \psi\},
	\end{equation} 
	and 
	\begin{equation}
	\int_{\Om} \Theta \cdot\nabla \eta\:  \dd x = \int_{\Om} \eta\rho\:  \dd x+ 	\int_{\partial\Om}\eta \:  \dd \theta ~~\mbox{for all}~\eta\in W^{1,\infty}(\Om).
	\end{equation}
The equation \begin{equation} 
\frac{\Theta}{\vert\Theta\vert}  \cdot\nabla_{\vert \Theta \vert } \uu  = H\left(.,\frac{\Theta}{\vert\Theta\vert} \right) ,\quad \vert \Theta\vert -\hbox{a.e. in } \Omega
\end{equation} 
is due to the optimality of $\bf{u}$ and $\Phi$ (see for example \cite{igbida2018optimal,nguyen2021monge}).

\end{proof}

	By uniqueness of the maximal viscosity subsolution of \eqref{hjobs} we easily deduce the following corollary.
	\begin{corollary}
		Let $H = \sigma$, with $\sigma$ being the support function of the $0$-sublevel sets of the Hamiltonian $F$ in \eqref{hjobs}. Then the whole sequence $\{\uu_p\}_p$ converges uniformly to the solution $\uu$ of \eqref{hjobs}. 
	\end{corollary}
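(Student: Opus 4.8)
The plan is to recognize that, for $H=\sigma$, the limit problem produced by Theorem \ref{thm:main_results} is nothing but the maximal-volume characterization of the solution of \eqref{hjobs}, and then to upgrade the subsequential convergence of Theorem \ref{thm:main_results} to convergence of the whole family by exploiting uniqueness. First I would specialize $H=\sigma$, so that $H^*=\sigma^*$ and the constraint set of $(\mathcal{KR})_\sigma$, namely $\{\sigma^*(x,\nabla u)\le 1 \text{ a.e.},\ \phi\le u\le\psi \text{ on }\partial\Omega\}$, coincides \emph{verbatim} with that of the maximal-volume problem \eqref{maxobs} (the regularity $u\in W^{1,\infty}(\Omega)$ being automatic from $\sigma^*(x,\nabla u)\le 1$ together with the non-degeneracy \eqref{equivhs}). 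By the maximal-volume characterization established after \eqref{maxobs}, this admissible set admits a pointwise-largest element $\uu$, which is the unique solution of \eqref{maxobs} and coincides with the maximal viscosity subsolution of \eqref{hjobs}. Being pointwise maximal, $\uu$ is then the unique maximizer of $\int_\Omega u\,\dd\rho$ whenever $\rho\ge 0$ is positive a.e.\ (in particular for $\rho=\chi_\Omega$); hence $\uu$ is the unique solution of $(\mathcal{KR})_\sigma$.

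Next I would apply Theorem \ref{thm:main_results}: along any subsequence of $\{u_p\}$ one extracts a further subsequence converging uniformly on $\Ob$ to a solution of $(\mathcal{KR})_\sigma$; by the uniqueness just recorded, that limit is necessarily $\uu$, independently of the chosen subsequence. To conclude I would invoke the standard subsequence principle. The uniform Hölder estimate \eqref{estimate_up}, being independent of $p$, makes $\{u_p\}$ equibounded and equicontinuous on $\Ob$, hence relatively compact in $C(\Ob)$ by Ascoli--Arzel\`a. Since every uniformly convergent subsequence must converge to the same limit $\uu$, the entire family $\{u_p\}$ converges uniformly to $\uu$.

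The step carrying the real weight is the first one: that the variational limit selects \emph{precisely} the maximal subsolution. This rests on the lattice/maximality structure of the admissible set of \eqref{maxobs} (closure under pointwise maximum, which in turn rests on $H=\sigma$ so that the constraint $\sigma^*(x,\nabla u)\le 1$ is exactly the class of $d_\sigma$-Lipschitz functions), and it is this feature that forces uniqueness of the maximizer. It is exactly this uniqueness that is unavailable for a general Finsler metric $H$, which is why Theorem \ref{thm:main_results} asserts only subsequential convergence; once it is in force, the upgrade to convergence of the whole sequence is immediate. I do not expect any further obstruction, since all the analytic estimates required are already supplied by Propositions \ref{psolfinsp} and \ref{prop:main_estimates}.
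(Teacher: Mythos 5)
Your proof is correct and is essentially the paper's own argument: the paper dispatches this corollary in one line (``by uniqueness of the maximal viscosity subsolution of \eqref{hjobs}''), and your write-up supplies precisely the details behind that line --- the identification of $(\mathcal{KR})_\sigma$ with the maximal-volume problem \eqref{maxobs}, the uniqueness of its maximizer coming from the pointwise-maximal element of the admissible class, and the Ascoli--Arzel\`a/subsequence principle upgrading the subsequential convergence of Theorem \ref{thm:main_results} to convergence of the whole family. Your observation that this requires $\rho\ge 0$ positive a.e.\ (e.g.\ $\rho=\chi_{\Omega}$) is a point of care the paper leaves implicit in the statement of the corollary, and it is exactly right.
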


Now let us state the PDE satisfied by the potential $\uu$ and the flow $\Theta$, which in particular will give a characterization of the HJ equation \eqref{hjobs}.

 \begin{proposition}
The couple $(\uu,\Theta)$ given by Theorem \ref{thm:main_results} is a solution of the following PDE 
 \begin{equation} 
 	\left\{
 	\begin{array}{ll}
 		-\operatorname{div}(\Theta) =  \rho  \quad  & \mbox{ in }\Omega \\  \\ 
 		\Theta \in \partial I\!\!I_{B_{H^{*}(x,.)}} (\nabla  \uu)  & \mbox{ in }\Omega \\  \\ 
 		\phi\leq \uu\leq \psi  & ~~\mbox{on}~~\partial\Om,
 	\end{array}
 	\right.
 \end{equation} 		
in the sense that:   $(\uu,\Theta) \in \W_{\phi,\psi} \times    \M_b(\Omega)^N, \Theta    \cdot\mathbf{n}= \theta   \in  \M_b(\partial\Omega) $,
 \begin{equation}   
 		\Theta \in \partial I\!\!I_{B_{H^{*}(x,.)}} (\nabla _{\vert \Theta\vert } \uu) 
 	 ,\quad \vert \Theta\vert -\hbox{a.e. in } \Omega, 
 \end{equation} 
 \begin{equation}  
 	\supp(\theta ^{+})\subset  \{\uu  = \phi\}\quad    \mbox{and}\quad  \supp(\theta^{-})\subset\{\uu  = \psi\},
 \end{equation} 
 and 
 \begin{equation} 
 	\int_{\Om} \Theta \cdot\nabla \eta\:  \dd x = \int_{\Om} \eta\rho\:  \dd x+ 	\int_{\partial\Om}\eta \:  \dd \theta ~~\mbox{for all}~\eta\in W^{1,\infty}(\Om). 
 \end{equation}
In particular, taking $H = \sigma$, with $\sigma$ being the support function of the $0$-sublevel sets of the Hamiltonian $F$, the maximal viscosity subsolution $\uu$ of \eqref{hjobs} is uniquely characterized by the existence of $\Theta \in \mathcal M_b(\Omega)^N$ such that the couple $(\uu,\Theta)$ is a solution of   the PDE  \begin{equation} 
	\left\{
	\begin{array}{ll}
		-\operatorname{div}(\Theta) =  1  \quad  & \mbox{ in }\Omega \\  \\ 
		\Theta \in \partial I\!\!I_{Z(x)} (\nabla  \uu)  & \mbox{ in }\Omega \\  \\ 
		\phi\leq \uu\leq \psi  & ~~\mbox{on}~~\partial\Om. 
	\end{array}
	\right.
\end{equation}  
  \end{proposition}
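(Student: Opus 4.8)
The plan is to recognize that this final Proposition is, in essence, a reformulation of Theorem \ref{thm:main_results} in the language of convex subdifferentials, so that the only genuinely new content is the pointwise inclusion $\Theta \in \partial I\!\!I_{B_{H^{*}(x,.)}}(\nabla_{\vert\Theta\vert}\uu)$, $\vert\Theta\vert$-a.e. Everything else—the membership $(\uu,\Theta)\in\W_{\phi,\psi}\times\M_b(\Omega)^N$, the identification $\Theta\cdot\mathbf{n}=\theta$, the support conditions on $\theta^{\pm}$, and the integration-by-parts identity \eqref{mf}—is inherited verbatim from Theorem \ref{thm:main_results}. I would therefore begin by transcribing these facts and then concentrate entirely on the inclusion.

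First I would recall that, for a closed convex set $C\subset\R^N$ containing the origin, the subdifferential of its indicator function at a point $\xi$ is the normal cone, so that $\Theta\in\partial I\!\!I_C(\xi)$ holds if and only if $\xi\in C$ and $\langle\Theta,\xi\rangle\geq\langle\Theta,p\rangle$ for every $p\in C$; equivalently, $\xi\in C$ together with $\langle\Theta,\xi\rangle = h_C(\Theta)$, where $h_C$ denotes the support function of $C$. Applying this with $C=B_{H^{*}(x,.)}$, I would identify the relevant support function via the Fenchel--Moreau (bipolar) theorem: since $H^{*}(x,\cdot)$ is by definition the support function of the unit ball of $H(x,\cdot)$, one has $B_{H^{*}(x,.)}=B_{H(x,.)}^{\circ}$ and hence $h_{B_{H^{*}(x,.)}}=H(x,\cdot)$, all gauges being finite, convex, positively $1$-homogeneous and non-degenerate by \eqref{equiv}--\eqref{equivhs}. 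Thus the target inclusion is equivalent to the two conditions $H^{*}(x,\nabla_{\vert\Theta\vert}\uu)\leq 1$ and $\langle\Theta,\nabla_{\vert\Theta\vert}\uu\rangle = H(x,\Theta)$, both understood in the $\vert\Theta\vert$-a.e.\ sense.

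Next I would produce these two conditions from Theorem \ref{thm:main_results}. The admissibility $H^{*}(x,\nabla\uu)\leq 1$ is part of the conclusion of that theorem, and the identity \eqref{HThetau} is exactly the statement $\frac{\Theta}{\vert\Theta\vert}\cdot\nabla_{\vert\Theta\vert}\uu = H(x,\frac{\Theta}{\vert\Theta\vert})$, $\vert\Theta\vert$-a.e.; multiplying by the density and invoking the positive $1$-homogeneity of $H(x,\cdot)$ yields $\langle\Theta,\nabla_{\vert\Theta\vert}\uu\rangle = H(x,\Theta)$, $\vert\Theta\vert$-a.e. Conversely, the Cauchy--Schwarz inequality \eqref{cs} together with $H^{*}(x,\nabla_{\vert\Theta\vert}\uu)\leq 1$ forces $\langle\Theta,\nabla_{\vert\Theta\vert}\uu\rangle\leq H(x,\Theta)H^{*}(x,\nabla_{\vert\Theta\vert}\uu)\leq H(x,\Theta)$, so that \eqref{HThetau} realizes equality in \eqref{cs}. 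This is precisely the normal-cone condition, and the inclusion follows.

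The main obstacle I anticipate is purely measure-theoretic: the admissibility inherited from Theorem \ref{thm:main_results} is stated with respect to Lebesgue measure, whereas the inclusion must hold $\vert\Theta\vert$-a.e., and $\vert\Theta\vert$ may be singular. This is exactly why the gradient has to be read as the tangential gradient $\nabla_{\vert\Theta\vert}\uu$ relative to $\vert\Theta\vert$; justifying that $H^{*}(x,\nabla_{\vert\Theta\vert}\uu)\leq 1$ holds $\vert\Theta\vert$-a.e.\ and that \eqref{HThetau} is meaningful rests on the tangential-gradient calculus for divergence-measure fields, for which I would invoke the machinery already cited in the proof of Theorem \ref{thm:main_results} (see \cite{igbida2018optimal,nguyen2021monge}). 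Once the inclusion is established, the specialization to $H=\sigma$ is immediate: since $\sigma(x,\cdot)$ is the support function of $Z(x)$, its unit ball is the polar $Z(x)^{\circ}$, whence $B_{\sigma^{*}(x,.)}=(Z(x)^{\circ})^{\circ}=Z(x)$ by the bipolar theorem, using that $Z(x)$ is convex, compact and contains $0$ in its interior. Taking $\rho\equiv 1$, which corresponds to the maximal-volume formulation \eqref{maxobs}, then gives the stated characterization of the maximal viscosity subsolution of \eqref{hjobs}, with uniqueness a consequence of the uniqueness of that subsolution.
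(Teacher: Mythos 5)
Your proposal is correct and takes essentially the same route as the paper: the paper's own proof consists precisely of the observation that the divergence, boundary, support and integration-by-parts conditions are inherited from Theorem \ref{thm:main_results}, while the inclusion $\Theta \in \partial I\!\!I_{B_{H^{*}(x,.)}}(\nabla_{\vert\Theta\vert}\uu)$ is recovered from \eqref{HThetau}. Your unpacking of this via the normal-cone/support-function characterization, the bipolar identifications $h_{B_{H^{*}(x,.)}} = H(x,\cdot)$ and $B_{\sigma^{*}(x,.)} = Z(x)$, and your explicit flagging of the Lebesgue-a.e.\ versus $\vert\Theta\vert$-a.e.\ subtlety (deferred, as in the paper, to the tangential-gradient machinery of the cited references) simply makes explicit what the paper leaves implicit.
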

\begin{proof} 
The divergence and boundary constraints follow from Theorem \ref{thm:main_results} and 
$$ \Theta \in \partial I\!\!I_{B_{H^{*}(x,.)}} (\nabla _{\vert \Theta\vert } \uu)$$
 is recovered by  \eqref{HThetau}.  
\end{proof}

\bigskip 
For general $H,$ it is labyrinthine  to phrase the flow $\Theta$ explicitly in terms of the gradient of the potential $\uu$ and the transport density alike Evans-Gangbo like formula in \eqref{mksys}. The following result points out two  particular situations showing  how  this is possible.

\bigskip

\begin{corollary}\label{cor1}
	Let $(\uu,\Theta)$ be a solution of the PDE \eqref{MKpde1}  in the sense of  Theorem \ref{thm:main_results}. If 
	\begin{equation} \label{mh}
		\vert  \Theta\vert    \ll \mathcal{L}^N, \end{equation} 
	 then,  setting  
	 \begin{equation}
	 	\label{tdensity}
	 \omega:= H(x, \Theta),
\end{equation}
	we have 
	\begin{equation} 
		\Theta = \omega \: \partial_\xi H^*(x,\nabla \uu)\quad \mathcal{L}^N-\text{ a.e.  } x\in \Omega,
	\end{equation}
	and 
	\begin{equation}  
		\omega\: (H ^*(x,\nabla \uu ) -1)=0 \: \quad \mathcal{L}^N-\text{ a.e. } x\in \Omega.
	\end{equation} 
\end{corollary}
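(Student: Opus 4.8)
The plan is to reduce the measure-theoretic identities of \eqref{MKpde1} to a pointwise statement and then exploit the equality case of the anisotropic Cauchy--Schwarz inequality \eqref{cs}. First I would use the absolute continuity \eqref{mh}: since $\vert\Theta\vert\ll\mathcal{L}^N$, the measure $\Theta$ is represented by an $L^1$ density (still denoted $\Theta$), the gradient $\nabla_{\vert\Theta\vert}\uu$ appearing in \eqref{HThetau} coincides $\mathcal{L}^N$-a.e. with the classical gradient $\nabla\uu$ (which exists a.e. because $\uu\in W^{1,\infty}(\Omega)$), and the qualifier ``$\vert\Theta\vert$-a.e.'' becomes ``$\mathcal{L}^N$-a.e. on $\{\Theta\neq 0\}$''. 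By the non-degeneracy \eqref{equiv} one has $0\le\omega=H(x,\Theta)\le b\vert\Theta\vert$ with $\omega=0$ exactly where $\Theta=0$, so $\{\omega>0\}=\{\Theta\neq 0\}$ up to a null set. Recall also from Theorem \ref{thm:main_results} that $\uu$ is admissible for $\KR$, hence $H^*(x,\nabla\uu)\le 1$ a.e.

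Second, I would establish the complementarity relation. On $\{\omega>0\}$, multiplying \eqref{HThetau} by $\vert\Theta\vert$ and using the $1$-homogeneity of $H$ gives the pointwise identity $\Theta\cdot\nabla\uu = H(x,\Theta)=\omega$. Feeding this into the Cauchy--Schwarz inequality \eqref{cs} together with $H^*(x,\nabla\uu)\le 1$ yields the chain
\[
\omega = \Theta\cdot\nabla\uu \le H(x,\Theta)\,H^*(x,\nabla\uu) = \omega\,H^*(x,\nabla\uu)\le\omega,
\]
so every inequality is an equality. Dividing by $\omega>0$ forces $H^*(x,\nabla\uu)=1$ there, while on $\{\omega=0\}$ the product vanishes trivially; together this is exactly $\omega\,(H^*(x,\nabla\uu)-1)=0$ $\mathcal{L}^N$-a.e.

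Third, I would deduce the representation of the flow from the equality case in \eqref{cs}. The chain above shows $\Theta\cdot\nabla\uu = H(x,\Theta)\,H^*(x,\nabla\uu)$, i.e. \eqref{cs} holds with equality at $(p,q)=(\Theta,\nabla\uu)$. On $\{\omega>0\}$ set $\widehat p := \Theta/\omega$; by homogeneity $H(x,\widehat p)=1$, so $\widehat p$ lies in the unit ball $\{p\in\R^N:H(x,p)\le 1\}$, and the equality case reads $\langle\widehat p,\nabla\uu\rangle = H^*(x,\nabla\uu)=\sup_{H(x,p)\le 1}\langle p,\nabla\uu\rangle$. Thus $\widehat p$ realises the supremum defining the support function $H^*(x,\cdot)$ of the convex body $\{H(x,\cdot)\le 1\}$. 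Since $H^*(x,\cdot)\in C^1(\R^N\setminus\{0\})$, this maximiser is unique and equals the gradient $\partial_\xi H^*(x,\nabla\uu)$ (cf. \cite{Schneiuder}; consistency with \eqref{ehf} and \eqref{fin_p3} is immediate). Hence $\Theta/\omega = \partial_\xi H^*(x,\nabla\uu)$, that is $\Theta = \omega\,\partial_\xi H^*(x,\nabla\uu)$ on $\{\omega>0\}$, whereas on $\{\omega=0\}=\{\Theta=0\}$ both sides vanish, so the identity holds $\mathcal{L}^N$-a.e.

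I expect the main obstacle to be this last step: justifying that equality in the anisotropic Cauchy--Schwarz inequality selects a \emph{single} admissible vector. This is precisely where the $C^1$-regularity hypothesis on $H^*$ is essential, as it guarantees that the face of the unit ball $\{H(x,\cdot)\le 1\}$ exposed by $\nabla\uu$ is a single point; one must also carefully dispose of the degenerate set $\{\Theta=0\}$, where $\nabla\uu$ may vanish and $\partial_\xi H^*$ is a priori undefined, by observing that the factor $\omega$ kills the product there.
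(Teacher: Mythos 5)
Your proof is correct and follows essentially the same route as the paper's: use the absolute continuity $\vert\Theta\vert\ll\mathcal{L}^N$ to identify $\nabla_{\vert\Theta\vert}\uu$ with $\nabla\uu$ and turn \eqref{HThetau} into the pointwise identity $\Theta\cdot\nabla\uu=\omega$, then combine this with $H^*(x,\nabla\uu)\le 1$ and the duality between $H$ and $H^*$. The only difference is one of detail: the paper compresses your second and third steps into the phrase ``by definition of $H^*$'', whereas you make explicit the equality case of \eqref{cs} and the fact that $C^1$-regularity of $H^*(x,\cdot)$ away from the origin forces the maximizer $\Theta/\omega$ to coincide with $\partial_\xi H^*(x,\nabla\uu)$, including the correct treatment of the degenerate set $\{\Theta=0\}$.
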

\begin{proof}   	If $\vert \Theta\vert \ll  \mathcal{L}^N,$ then  $\nabla_{\vert \Theta \vert  } \uu=\nabla \uu, \mathcal L^{N}-$ a.e.in $\Omega$, and by taking $\omega$ as in \eqref{tdensity}, the relationship  \eqref{HThetau} implies that $\Theta \cdot \nabla u= \omega~ \mathcal L^{N}-$ a.e. in $\Omega$.   Since, moreover $H^*(x,\nabla \uu)\leq 1,$ then by definition of $H^*,$ we get 
		\begin{equation}
		\Theta = \omega \: \partial_\xi H^*(x,\nabla_\omega \uu) \hbox{ and }\omega\: ( H ^*(.,\nabla_\omega \uu ) -1)= 0  , \quad \mathcal{L}^{N}-\hbox{a.e. in  }\Omega.
	\end{equation} 
\end{proof}

\begin{corollary}\label{mr}
	Let $(\uu,\Theta)$ be a solution of \eqref{MKpde1} in the sense of Theorem  \ref{thm:main_results}.  We set again 
	$$\omega :=    H(x, \Theta)$$ 
	and,  we   assume moreover that
	\begin{equation}
		\label{asp}
		H ^*(x,\nabla_\omega \uu ) \leq 1 \quad \omega-\text{a.e. } x\in \Omega.
	\end{equation}
	Then 
	\begin{equation}
\Theta = \omega \: \partial_\xi H^*(x,\nabla_\omega \uu),
	\end{equation}
and 
\begin{equation} \label{H*un}
	H ^*(x,\nabla_\omega \uu ) = 1\: \quad \omega-\text{a.e. } x\in \Omega.
\end{equation} 

\end{corollary}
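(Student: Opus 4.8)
The plan is to run essentially the same duality argument as in Corollary \ref{cor1}, but now reading everything against the scalar measure $\omega = H(x,\Theta)$ instead of Lebesgue measure. First I would record that, by $1$-homogeneity of $H$ and the polar decomposition $\Theta = \frac{\Theta}{\vert\Theta\vert}\,\vert\Theta\vert$, the measure $\omega$ defined in \eqref{tdensity} has density $H(x,\tfrac{\Theta}{\vert\Theta\vert})$ with respect to $\vert\Theta\vert$. Since $\bigl\vert\tfrac{\Theta}{\vert\Theta\vert}\bigr\vert = 1$ holds $\vert\Theta\vert$-a.e., the non-degeneracy bound \eqref{equiv} gives $a\leq H(x,\tfrac{\Theta}{\vert\Theta\vert})\leq b$, so $\omega$ and $\vert\Theta\vert$ are mutually absolutely continuous. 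In particular the tangential gradients coincide, $\nabla_\omega\uu = \nabla_{\vert\Theta\vert}\uu$ ($\omega$-a.e.), which lets me transfer the relation \eqref{HThetau} and the hypothesis \eqref{asp} freely between the two measures.

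Next I would apply the Cauchy--Schwarz-type inequality \eqref{cs} to the pair $p = \tfrac{\Theta}{\vert\Theta\vert}$, $q = \nabla_\omega\uu$, obtaining
\begin{equation*}
\frac{\Theta}{\vert\Theta\vert}\cdot\nabla_\omega\uu \;\leq\; H\Bigl(x,\tfrac{\Theta}{\vert\Theta\vert}\Bigr)\,H^*(x,\nabla_\omega\uu),\qquad \omega\text{-a.e.}
\end{equation*}
The left-hand side equals $H(x,\tfrac{\Theta}{\vert\Theta\vert})$ by \eqref{HThetau}, and this quantity is bounded below by $a>0$ thanks to \eqref{equiv}; dividing through yields $H^*(x,\nabla_\omega\uu)\geq 1$ $\omega$-a.e. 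Combined with the standing assumption \eqref{asp}, this forces $H^*(x,\nabla_\omega\uu)=1$ $\omega$-a.e., which is precisely \eqref{H*un}.

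It remains to upgrade this equality of norms into the pointwise identity for $\Theta$, and this is the step I expect to be the crux. The previous computation shows that \eqref{cs} in fact holds with equality $\omega$-a.e.: the vector $\bar p := \tfrac{\Theta/\vert\Theta\vert}{H(x,\Theta/\vert\Theta\vert)}$, which satisfies $H(x,\bar p)=1$, realizes $\langle \bar p,\nabla_\omega\uu\rangle = H^*(x,\nabla_\omega\uu)$. Since $H^*(x,\cdot)$ is the support function of the unit ball $\{p:H(x,p)\leq 1\}$, the vector $\bar p$ is a maximizer of $\langle\,\cdot\,,\nabla_\omega\uu\rangle$ over that ball. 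By Euler's identity \eqref{ehf} together with \eqref{fin_p3}, the vector $\partial_\xi H^*(x,\nabla_\omega\uu)$ is another such maximizer lying on $\{H(x,\cdot)=1\}$. Because $H^*(x,\cdot)\in C^1(\R^N\setminus\{0\})$ and $\nabla_\omega\uu\neq 0$ $\omega$-a.e. (as $H^*(x,\nabla_\omega\uu)=1$), the supporting point of the convex body in the direction $\nabla_\omega\uu$ is unique, so $\bar p = \partial_\xi H^*(x,\nabla_\omega\uu)$.

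Unwinding the definition of $\bar p$ and using $\omega = H(x,\tfrac{\Theta}{\vert\Theta\vert})\,\vert\Theta\vert$ then gives
\begin{equation*}
\Theta = H\Bigl(x,\tfrac{\Theta}{\vert\Theta\vert}\Bigr)\,\vert\Theta\vert\;\partial_\xi H^*(x,\nabla_\omega\uu) = \omega\,\partial_\xi H^*(x,\nabla_\omega\uu),
\end{equation*}
as claimed. The delicate points to watch are the null-set bookkeeping between $\omega$ and $\vert\Theta\vert$ (so that \eqref{HThetau} and \eqref{asp} may be read against the same measure) and the justification that differentiability of the support function $H^*$ yields a unique supporting point; this last fact rests on the standard identification of $\nabla H^*(x,q)$ with the unique maximizer of $\langle\,\cdot\,,q\rangle$ over $\{H(x,\cdot)\leq 1\}$, which is exactly where the $C^1$ hypothesis on $H^*$ is used.
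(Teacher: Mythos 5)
Your proposal is correct and follows essentially the same route as the paper's proof: identify $\nabla_\omega\uu$ with $\nabla_{\vert\Theta\vert}\uu$ via mutual absolute continuity, use the Cauchy--Schwarz inequality \eqref{cs} together with \eqref{HThetau} and the hypothesis \eqref{asp} to force $H^*(x,\nabla_\omega\uu)=1$ $\omega$-a.e., and then conclude from the equality case that $\frac{\dd\Theta}{\dd\omega}$ is the (unique, by differentiability of the support function $H^*$) maximizer $\partial_\xi H^*(x,\nabla_\omega\uu)$. The only difference is expository: where the paper compresses the last step into ``the results follow by definition of $H^*$,'' you spell out the uniqueness of the supporting point and the normalization $\bar p = \frac{\Theta/\vert\Theta\vert}{H(x,\Theta/\vert\Theta\vert)}$, which is a faithful elaboration of the same argument.
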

\begin{proof}
	See that $\nabla_{\vert \Theta\vert }\uu=\nabla_{\omega} \uu$ and  
	\begin{equation} 	 H\left (x,\frac{\dd \Theta}{\dd\omega }\right)=1 \quad \omega  -\hbox{a.e. }\Omega. 
	\end{equation}  
So, in  one hand, using the fact that 
		\begin{equation}
		\nabla_{\vert \Theta\vert } u \cdot \frac{\Theta}{\vert \Theta\vert } =  H\left (x,\frac{\Theta}{\vert \Theta\vert }\right) \quad \vert \Theta\vert -\hbox{a.e. }\Omega. 
	\end{equation} 
	 we have 
	\begin{equation}
	 \nabla_\omega \uu\cdot \frac{\dd\Theta}{\dd\omega}   = 	\nabla_{\vert \Theta\vert } u \cdot \frac{\dd\Theta}{\dd\omega } = 1 \quad \omega-\hbox{a.e. }\Omega.
	\end{equation}
 On the other, we see that 
	 	\begin{equation}
	 \nabla_\omega u\cdot \frac{\dd\Theta}{\dd\omega}   \leq H^*(x, \nabla_\omega u)\:   H\left (x,\frac{\dd \Theta}{\dd\omega }\right)  =  H^*(x, \nabla_\omega u)   \quad \omega-\hbox{a.e. }\Omega.
	\end{equation}
So, assuming  \eqref{asp},  we get 
		\begin{equation}
		1= \nabla_\omega u\cdot \frac{\dd\Theta}{\dd\omega}   =  H^*(x, \nabla_\omega u)\:   H\left (x,\frac{\dd \Theta}{\dd\omega }\right)  =  H^*(x, \nabla_\omega u)   \quad \omega-\hbox{a.e. }\Omega.
	\end{equation}
 Thus the results follow by definition of $H^*.$

\end{proof}

\begin{remark} 

		Combining Theorem \ref{thm:main_results} and Corollaries  \ref{cor1}--\ref{mr}, the couple $(\omega:= H(x,\Theta), \uu)$ solves the  associated Monge-Kantorovich system to $\KR$ and $\B_H$:
	\begin{equation}
		\label{MK_H}
		\left\{\begin{array}{ll}-\operatorname{div}(\omega\partial_{\xi}H^{*}(x,\nabla_\omega \uu))=\rho & \text {in } \Omega \\ 
			\partial_{\xi}H^*(x,\nabla_\omega \uu)\cdot\mathbf{n} \geq 0 & \text {on }\left\{\uu  = \phi\right\} \\ 
			\partial_{\xi}H^*(x,\nabla _\omega \uu)\cdot\mathbf{n} \leq 0 & \text {on } \left\{\uu  =  \psi\right\} \\ 
			\partial_{\xi}H ^{*}(x,\nabla_\omega u)\cdot\mathbf{n}  = 0 & \text {in } \{\phi < \uu <\psi\}\\
			\phi \leq \uu \leq \psi & \text {on } \partial \Omega \\ 
			H ^{*}(x,\nabla_\omega \uu)\leq 1 & \text {in } \Omega \\ 
			H^{*}(x,\nabla _\omega \uu)= 1& \omega-\text {a.e.}\end{array}\right.
	\end{equation}
		In particular, given a positive continuous function $k:\Ob\to\R$, and define the following Finsler metric $H (x,p) = k(x)\vert p\vert$ for $(x,p)\in\Ob\times\R^N$. We easily see that its dual reads
		\[
		H ^{*}(x,q)  = \frac{\vert q\vert}{k(x)},
		\] 
		and the systems \eqref{MKp}-\eqref{MK_H} reduce the ones studied in \cite{dweik2018weighted}.
		
		 Moreover, if the Finsler metric is defined via the so called Minkowski functional (or gauge function) 
		\[
		\mathbf{g}_{K}(p) = \inf\{t>  0:~t^{-1} p \in K  \},
		\]
		where $K$ is a convex, closed and bounded set $\R^N$, then considering $H^*(x,p) = \mathbf{g}_{K}(p)$ and $\phi= \psi$, we recover the Monge--Kantorovich system studied in \cite{Crasta&Malusa}.

\end{remark}

	\section{Connection with Monge--Kantorovich problem}\label{connection}

	Let us recall that we can derive a dual problem to $\KR$ using perturbation techniques (as in \cite{dweik2018weighted,ennaji2020augmented}), to get the following Kantorovich problem
	
	\begin{equation}
		\KP:~ \min_{\gamma\in\Pi(\rho^+,\rho^-)} \Big\{\int_{\Ob\times\Ob} d_H(x,y) \dd\gamma(x,y) +  \int_{\partial\Omega} \psi(y) \dd (\pi_{y})_{\sharp}\gamma  - \int_{\partial\Omega} \phi(x) \dd (\pi_{x})_{\sharp}\gamma  \Big\}.
	\end{equation}
	Here $\Pi(\rho^+,\rho^-) = \{\gamma\in\M^{+}(\Ob\times\Ob):~(\pi_{x})_{\sharp}\gamma\mres\Omega = \rho ^+, (\pi_{y})_{\sharp}\gamma\mres\Omega = \rho ^-\}$, with $\pi_{x}$ and $\pi_{y}$ stand for the usual projections of $\Ob\times\Ob$ onto $\Ob$, that is $\pi_{x}(x,y) = x$ and $\pi_{y}(x,y) = y$ for any $(x,y)\in\Ob\times\Ob$ and
	\[
	(\pi_{x})_{\sharp}\gamma\mres\Omega = \rho ^+ \Leftrightarrow \gamma(A\times \Ob) = \rho^{+}(A)~\mbox{for any Borelean }A\subset \Om,
	\]
	\[
		(\pi_{y})_{\sharp}\gamma\mres\Omega = \rho ^- \Leftrightarrow \gamma(\Ob\times B) = \rho^{-}(B)~\mbox{for any Borelean }B\subset \Om.\]
		
	The existence of optimal solution to $\KP$ can be obtained using the direct method of calculus of variations. Moreover,  all the extremal values coincide:
	\begin{equation}
		\label{equalities_H}
	\min\B_H = \min\KP = \max\KR.
	\end{equation}
	Here $\phi$ and $\psi$ play the role of import/export costs for the Kantorovich problem $\K$ as in \cite{dweik2018weighted,mazon2014optimal} for the Euclidean and Riemannian costs. In addition, we show that the measure $\theta$ constructed in Theorem \ref{thm:main_results} will add to the measure $\rho$ so that the potential $\uu$ will be a Kantorovich potential for the classical transport problem on $\Ob$ between $\mu:=\rho ^{+}\lebes\mres\Om + \theta^+$ and $\nu:=\rho ^{-}\lebes\mres\Om + \theta^-,$ that is

	\[
	\int_{\Ob}\uu \dd(\mu - \nu) = \min_{\gamma\in\Gamma(\mu,\nu)} \int_{\Ob\times\Ob} d_{H}(x,y) \dd\gamma(x,y),
	\]
	where $\Gamma(\mu, \nu) := \{\gamma\in\M^{+}(\Ob\times\Ob):~(\pi_{x})_{\sharp}\gamma = \mu, (\pi_{y})_{\sharp}\gamma = \nu\}$ denotes the set of transport plans from $\mu$ to $\nu$ on $\Ob$. 

	\begin{proposition}
		\label{prop4}
		Let $\uu$ be the limit of the family of Finsler $p$-Laplace problems constructed in Theorem \ref{thm:main_results}. Then $\uu$ is a Kantorovich potential for the classical optimal transport problem between $\rho ^{+}\lebes\mres\Om + \theta^+$ and $\rho ^{-}\lebes\mres\Om + \theta^- $. Moreover
		\[
		\int_{\Om} \uu \rho \dd x = \min\KP.
		\]
		
	\end{proposition}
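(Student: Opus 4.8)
The plan is to recognise $\Theta$ as an admissible flow for the (boundary--free) classical transport between the balanced measures $\mu:=\rho^{+}\lebes\mres\Om+\theta^{+}$ and $\nu:=\rho^{-}\lebes\mres\Om+\theta^{-}$, to compute $\int_{\Ob}\uu\dd(\mu-\nu)$ in closed form, and then to close a short chain of weak--duality inequalities. First I would test the weak formulation \eqref{mf} with $\eta\in\D(\R^{N})$ to read off
\[
-\operatorname{div}(\Theta)=\rho\,\lebes\mres\Om+\theta=\mu-\nu\quad\text{in }\D^{'}(\R^{N}),
\]
since $\mu-\nu=(\rho^{+}-\rho^{-})\lebes\mres\Om+(\theta^{+}-\theta^{-})=\rho\,\lebes\mres\Om+\theta$. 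Testing \eqref{mf} with $\eta\equiv 1$ gives $\mu(\Ob)=\nu(\Ob)$, so the transport problem is balanced and $\Theta$ is admissible for the Beckmann problem associated with $(\mu,\nu)$.

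Next comes the key computation. Writing $\mu-\nu=\rho\,\lebes\mres\Om+\theta$ I split
\[
\int_{\Ob}\uu\dd(\mu-\nu)=\int_{\Om}\uu\,\rho\dd x+\int_{\partial\Om}\uu\dd\theta .
\]
The support conditions $\supp(\theta^{+})\subset\{\uu=\phi\}$ and $\supp(\theta^{-})\subset\{\uu=\psi\}$ from Theorem \ref{thm:main_results}, together with the continuity of $\uu$, let me replace $\uu$ by $\phi$ against $\theta^{+}$ and by $\psi$ against $\theta^{-}$, so that $\int_{\partial\Om}\uu\dd\theta=\int_{\partial\Om}\phi\dd\theta^{+}-\int_{\partial\Om}\psi\dd\theta^{-}$. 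Substituting the optimality identity established in the proof of Theorem \ref{thm:main_results}, namely
\[
\int_{\Om}\uu\,\rho\dd x=\int_{\Om}H\Big(x,\frac{\Theta}{\vert\Theta\vert}\Big)\dd\vert\Theta\vert-\int_{\partial\Om}\phi\dd\theta^{+}+\int_{\partial\Om}\psi\dd\theta^{-},
\]
the boundary terms cancel and I obtain the clean identity
\[
\int_{\Ob}\uu\dd(\mu-\nu)=\int_{\Om}H\Big(x,\frac{\Theta}{\vert\Theta\vert}\Big)\dd\vert\Theta\vert ;
\]
the same relation also follows directly by integrating \eqref{HThetau} against $\vert\Theta\vert$ and using $-\operatorname{div}(\Theta)=\mu-\nu$.

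Finally I close the duality loop. Since $H^{*}(x,\nabla\uu)\leq 1$ a.e., the function $\uu$ is $1$--Lipschitz for $d_{H}$ and hence admissible in the Kantorovich--Rubinstein dual, giving $\int_{\Ob}\uu\dd(\mu-\nu)\leq\min_{\gamma\in\Gamma(\mu,\nu)}\int_{\Ob\times\Ob}d_{H}\dd\gamma$. Conversely, by the Beckmann--Kantorovich equivalence for the Finsler distance $d_{H}$ (the boundary--free analogue of the equalities \eqref{equalities_H}), the minimal Beckmann energy equals the Kantorovich cost, and as $\Theta$ is an admissible flow its energy dominates this cost: $\min_{\gamma}\int d_{H}\dd\gamma\leq\int_{\Om}H(x,\frac{\Theta}{\vert\Theta\vert})\dd\vert\Theta\vert$. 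Combined with the displayed identity, the two inequalities collapse into equalities, which exhibits $\uu$ as a maximiser of the dual, i.e.\ a Kantorovich potential between $\mu$ and $\nu$ (and $\Theta$ as an optimal flow). The remaining assertion $\int_{\Om}\uu\,\rho\dd x=\min\KP$ is then immediate from the already--established chain $\int_{\Om}\uu\,\rho\dd x=\max\KR=\min\KP$ in \eqref{equalities_H}. The step I expect to be the main obstacle is precisely this last one: one must invoke the Kantorovich--Rubinstein and Beckmann--Kantorovich dualities in the correct form for the possibly non--symmetric $d_{H}$, and verify that $\mu,\nu$ are genuine finite measures on $\Ob$ with matched total mass, so that the boundary pieces $\theta^{\pm}$ may legitimately be absorbed into the transported masses.
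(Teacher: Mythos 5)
Your proof is correct, and it reaches the paper's pivotal identity \eqref{e3}, namely $\int_{\Om}\uu\,\rho\,\dd x+\int_{\partial\Om}\uu\,\dd\theta=\int_{\Om}H\big(x,\frac{\Theta}{\vert\Theta\vert}\big)\dd\vert\Theta\vert$, by a genuinely different and shorter route. The paper re-derives this identity from the $p$-limit itself: it tests the flux-trace definition \eqref{dist} with $\eta=\uu$ and then with $\eta=\uu-w_\epsilon$, where $w_\epsilon$ are the smooth $1$-Lipschitz approximants of Lemma \ref{l3}, and closes the resulting chain with Reshetnyak lower semicontinuity and H\"older's inequality, essentially repeating the computations of Theorem \ref{thm:main_results}. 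You instead read the identity off the conclusions of Theorem \ref{thm:main_results}: optimality of $\uu$ for $\KR$ and of $(\Theta,\theta)$ for $\B_H$, together with $\min\B_H=\max\KR$ from \eqref{equalities_H}, give $\int_{\Om}\uu\rho\,\dd x=\int_{\Om}H(x,\frac{\Theta}{\vert\Theta\vert})\dd\vert\Theta\vert-\int_{\partial\Om}\phi\,\dd\theta^{+}+\int_{\partial\Om}\psi\,\dd\theta^{-}$, and the support conditions $\supp(\theta^{+})\subset\{\uu=\phi\}$, $\supp(\theta^{-})\subset\{\uu=\psi\}$ make the boundary terms cancel — so your quotation is justified by stated results, not merely by the internals of that proof, and Proposition \ref{prop4} becomes a clean corollary requiring no further passage to the limit in $p$. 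The endgame also differs: you close the sandwich using the boundary-free Beckmann--Kantorovich equivalence for $d_H$ as an external ingredient, whereas the paper uses $\Theta$ itself as a certificate of dual optimality — for every $v$ with $H^{*}(x,\nabla v)\le 1$ one has $\int v\,\dd(\mu-\nu)=\int\nabla v\cdot\dd\Theta\le\int H(x,\frac{\Theta}{\vert\Theta\vert})\dd\vert\Theta\vert=\int\uu\,\dd(\mu-\nu)$ — and then invokes classical Kantorovich duality once; you could adopt this step and drop the Beckmann equivalence altogether. Finally, the orientation caveat you flag is real but shared with the statement itself: for non-symmetric $H$, weak duality pairs a $1$-Lipschitz potential with target minus source, so under the paper's convention for $d_H$ the classical cost should strictly be read with the arguments reversed (or the marginals swapped); this affects the paper's formulation and your proof identically, and is immaterial whenever $H(x,\cdot)$ is even.
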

	\begin{proof}
		In the definition of $\Theta_p\cdot\mathbf{n}$ in \eqref{dist}, we take as a test function $\eta = \uu$ to get
		\[
		\int_{\partial\Omega} \uu \dd (\Theta_p\cdot\mathbf{n}) = \int_{\Omega} \Theta_p\cdot\nabla\uu \dd x - \int_{\Omega} \uu\rho\dd x.
		\]
		Thanks to Theorem \ref{thm:main_results}, passing to the limit $p\to\infty$ (up to a subsequence) we get 
	
		\begin{equation}
			\label{r1}
			\lim\limits_{p\to \infty} \int_{\Omega} \Theta_p\cdot\nabla\uu \dd x = 
			\int_{\partial\Omega} \uu \dd\theta + \int_{\Omega} \uu\rho\dd x.
		\end{equation}	
		Since $\uu$ is $1-$Lipschitz with respect to $d_H$, we may find thanks to Lemma \ref{l3}, a sequence of smooth functions $w_\epsilon$ converging uniformly to $\uu$ and enjoying the property of being $1-$Lipschitz with respect to $d_H$. By definition of $\Theta_p\cdot\mathbf{n}$, we get
		\[
		\int_{\partial\Omega} (\uu  - w_\epsilon) \dd (\Theta_p\cdot\mathbf{n}) =  \int_{\Omega} \Theta_p\cdot (\nabla\uu - \nabla w_\epsilon) \dd x - \int_{\Omega} (\uu - w_\epsilon)\rho\dd x.
		\]
		Taking $p\to \infty$ (again, up to a subsequence) and keeping in mind \eqref{r1}, we get
		\begin{equation}
			\label{e2}
			\int_\Omega \uu \rho \dd x + \int_{\partial\Omega} \uu \dd\theta = \int_{\Omega} (\uu - w_\epsilon)\rho\dd x + \int_{\partial\Omega} (\uu  - w_\epsilon) \dd\theta + \int_{\Omega} \Theta\cdot\nabla w_\epsilon \dd x  = A_\epsilon + B_\epsilon,
		\end{equation}
		with $A_\epsilon = \int_{\Omega} (\uu - w_\epsilon)\rho\dd x + \int_{\partial\Omega} (\uu  - w_\epsilon) \dd\theta $ and $B_\epsilon = \int_{\Omega} \Theta\cdot\nabla w_\epsilon \dd x $. Since $w_\epsilon$ converges uniformly to $\uu$ on $\Ob$, we have that $A_\epsilon\to 0$ as $\epsilon\to 0$. We claim that 
		\[
		B_\epsilon \to \int_\Omega H(x,\frac{\Theta}{\vert\Theta\vert}) \dd\vert\Theta\vert
		\]
		as $\epsilon\to 0$. We first observe that
		\begin{align*}
			\int_\Omega \uu \rho\dd x &= \lim_{\epsilon\to 0} \int_\Omega w_\epsilon \rho\dd x\\
			&\le\lim_{\epsilon\to 0} \int_\Omega \nabla w_\epsilon \frac{\Theta}{\vert\Theta\vert} \dd\vert\Theta\vert  + \int_{\partial\Om}\psi\dd\theta^{-}  - \int_{\partial\Om}\phi\dd\theta^{+}
			\\
			&\leq \lim_{\epsilon\to 0} \int_\Omega H^*(x,\nabla w_\epsilon)H(x,\frac{\Theta}{\vert\Theta\vert}) \dd\vert\Theta\vert+ \int_{\partial\Om}\psi\dd\theta^{-}  - \int_{\partial\Om}\phi\dd\theta^{+}\\
			&\leq \int_\Omega H(x,\frac{\Theta}{\vert\Theta\vert}) \dd\vert\Theta\vert+ \int_{\partial\Om}\psi\dd\theta^{-}  - \int_{\partial\Om}\phi\dd\theta^{+}
		\end{align*}
		where  we have used Lemma \ref{l3} for the last inequality. 
		
		Again we proceed as in the proof of Theorem \ref{thm:main_results}: since $\Theta_p\rightharpoonup \Theta$, we have by Reshetnyak's lower semicontinuity theorem, we get 
		
		\begin{align}
			\int_\Omega H(x,\frac{\Theta}{\vert\Theta\vert}) \dd\vert\Theta\vert &\leq \liminf_p \int_\Omega H(x,\frac{\Theta_p}{\vert\Theta_p\vert}) \dd\vert\Theta_p\vert\\
			& = \liminf_p\int_\Omega H\Big(x, H ^{*}(x,\nabla u_p) ^{p-1} \partial_{\xi}H ^{*}(x,\nabla u_p)\Big) \dd x\\
			& = \liminf_p \int_\Omega H ^{*}(x,\nabla u_p) ^{p-1}  H(x,\partial_{\xi}H ^{*}(x,\nabla u_p)) \dd x \\
			&\leq \liminf_p\Big(\int_\Omega   H ^{*}(x,\nabla u_p) ^{p} \dd x\Big) ^{\frac{p-1}{p}}\\
			& = \liminf_p \Big(\int_\Omega   H ^{*}(x,\nabla u_p) ^{p-1}~\partial_{\xi}H ^{*}(x,\nabla u_p)\cdot\nabla u_p \dd x\Big) ^{\frac{p-1}{p}}\\
			& =  \liminf_p \Big(\int_\Omega  \nabla u_p \dd\Theta_p \Big) ^{\frac{p-1}{p}}\\
			& =  \int_\Omega  \uu \rho \dd x + \int_{\partial\Om} \uu\dd\theta\\
			& = \lim_{\epsilon\to 0} \int_{\Omega} w_\epsilon \rho\dd x+\int_{\partial\Om} w_\epsilon\dd\theta
		\end{align}
		where we have used  Hölder's inequality combined with \eqref{ehf} and \eqref{fin_p3}. Coming back to \eqref{e2} we get
		\begin{equation}
			\label{e3}
			\int_\Omega \uu \rho \dd x + \int_{\partial\Omega} \uu \dd\theta  = \int_\Omega H(x,\frac{\Theta}{\vert\Theta\vert}) \dd\vert\Theta\vert.
		\end{equation}
		To conclude, let us observe that taking $v\in W ^{1,\infty}(\Omega)$ such that $H ^{*}(x,\nabla v(x)) \leq 1$, we have
		\begin{align*}
			\int_\Omega \uu \rho \dd x + \int_{\partial\Omega} \uu \dd\theta  &= \int_\Omega H(x,\frac{\Theta}{\vert\Theta\vert}) \dd\vert\Theta\vert\\
			&\geq \int_\Omega  \frac{\Theta}{\vert\Theta\vert} \cdot \nabla v\dd\vert\Theta\vert\\
			& = \int_\Omega \nabla v \dd\Theta = \int_\Omega v\rho \dd x + \int_{\partial\Omega} v\dd\theta. 
		\end{align*}
		Thanks to \eqref{equalities_H} and the classical Kantorovich duality, we have 
		\[
		\int_\Omega \uu \rho \dd x + \int_{\partial\Omega} \uu \dd\theta  = \int_{\Ob\times\Ob} d_H(x,y)\dd\gamma(x,y),
		\]
		where $\gamma$ is an optimal plan of 
		\[
		\min  \Big\{\int_{\Ob\times\Ob} d_H (x,y) \dd\gamma(x,y):~(\pi_{x})_{\sharp}\gamma= \rho ^{+}\lebes\mres\Om + \theta^{+}, (\pi_{y})_{\sharp}\gamma= \rho ^{-}\lebes\mres\Om + \theta^{-} \Big\}.
		\]
		Since $(\pi_{x})_{\sharp}\gamma\mres\partial\Omega = \theta ^+$ and  $(\pi_{y})_{\sharp}\gamma\mres\partial\Omega = \theta ^-$ we deduce that
		\[
		\int_\Omega \uu \rho \dd x  = \int_{\Ob\times\Ob} d_H(x,y)d\gamma(x,y) + \int_{\partial\Omega} \psi\dd\theta^- - \int_{\partial\Omega} \phi\dd\theta^+ = \min\KP.
		\]
	\end{proof}

	\section{Appendix}
	Let us recall some facts concerning the notion of tangential gradient which played an important role in the previous proofs. To give a glimpse on the necessity to introduce this notion, let us remember that Beckmann's transportation problem is an optimisation problem on measure space under a divergence constraint. More particularly, the flow satisfies $-\operatorname{div}(\Phi) = \mu \in \M_b(\Ob)$. To do further analysis on such a problem and particularly to derive its dual problem we naturally attempt to integrate by parts in the divergence constraint and write, for some Lipschitz function $u$
	\[
	\int \nabla u\cdot\sigma~ \dd \gamma= \int u \dd\mu,
	\]
	where $\gamma = \vert\Phi\vert$ and $\sigma = \frac{\Phi}{\vert\Phi\vert}$. Observe that $\nabla u$ may not be well-defined on a $\vert\Phi\vert$-positive measure set and thus the previous formula may not have sense. Thanks to \cite{bouchitte1997energies} it is possible to give a sense to the previous formula using the notion of  tangential gradient as follows. First we can define the tangent space to the measure $\gamma$ 
	\[
	\mathcal{X}_\gamma (x) = \gamma-\text{ess}\cup \Big\{ \sigma(x):~ \sigma\in L^{1}_{\gamma}(\Ob,\R^N),~\operatorname{div}(\sigma\gamma)\in\M_b(\Ob)\Big\}.
	\]
	Then, the tangential gradient $\nabla_\gamma u(x)$ to a function $u\in C^1(\Ob)$ at $x$ with respect to the measure $\gamma$ is the orthogonal projection of $\nabla u(x)$ onto $\mathcal{X}_{\gamma}(x)$. Denoting by $\textbf{P}_{\gamma}(x)$ the orthogonal projection on $\mathcal{X}_\gamma (x)$, it has been shown in \cite{bouchitte2005completion} that the linear operator $u\in C ^1(\Ob) \to \nabla_\gamma u(x):=\textbf{P}_{\gamma}(x)\nabla u(x)\in L^{\infty}_{\gamma}(\Ob,\R^N)$ can be uniquely extended to a linear continuous operator
	\[
	\nabla_\gamma: u\in \hbox{Lip}(\Ob)\to \nabla_\gamma u\in L^{\infty}_{\gamma}(\Ob,\R^N).
	\]
Moreover, we have the following useful integration by parts formula
\begin{proposition}[\cite{bouchitte2005completion}]
	\label{IPP_TG}
	Given $\gamma\in\M_{b}^{+}(\Ob)$ and $\upsilon\in L^{1}_{\gamma}(\Ob,\R^N)$ such that $\upsilon(x)\in\mathcal{X}_{\gamma}(x)$ for $\gamma-$\text{a.e} $x$. and $\operatorname{div}(\gamma\upsilon):= \rho\in\M_{b}(\Ob)$. One then has
	\[
	\int_{\overline \Omega} u \dd\rho= \int_{\overline\Omega}\upsilon \nabla_\gamma u \dd \gamma,
	\]
	for any $u\in\mathrm{Lip}(\Ob)$.
\end{proposition}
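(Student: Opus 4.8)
The plan is to prove the identity first for smooth functions, where it is nothing but the defining property of the divergence constraint, and then to extend it to all Lipschitz functions by approximation, using the continuity of the tangential-gradient operator recalled just above.

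First I would dispose of the case $u\in C^1(\Ob)$. Tested against $C^1$ functions, the hypothesis $\operatorname{div}(\gamma\upsilon)=\rho$ in $\M_b(\Ob)$ is exactly the statement that
\[
\int_{\Ob}\upsilon\cdot\nabla u\,\dd\gamma=\int_{\Ob}u\,\dd\rho\qquad\text{for all }u\in C^1(\Ob).
\]
The only thing to check is that here the full gradient $\nabla u$ may be replaced by the tangential gradient $\nabla_\gamma u=\mathbf{P}_\gamma\nabla u$. This is where the assumption $\upsilon(x)\in\mathcal{X}_\gamma(x)$ for $\gamma$-a.e.\ $x$ enters: it says precisely that $\mathbf{P}_\gamma(x)\upsilon(x)=\upsilon(x)$, and since $\mathbf{P}_\gamma(x)$ is an orthogonal, hence self-adjoint, projection one gets
\[
\upsilon\cdot\nabla u=(\mathbf{P}_\gamma\upsilon)\cdot\nabla u=\upsilon\cdot(\mathbf{P}_\gamma\nabla u)=\upsilon\cdot\nabla_\gamma u\qquad\gamma\text{-a.e.}
\]
Substituting into the previous display gives the claimed formula for every $u\in C^1(\Ob)$.

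Next I would pass to a general $u\in\mathrm{Lip}(\Ob)$ by density. Pick $u_n\in C^1(\Ob)$ with $u_n\to u$ uniformly on $\Ob$ and with equibounded Lipschitz constants (mollify a Lipschitz extension of $u$). On the left-hand side, uniform convergence of $u_n$ together with $\rho\in\M_b(\Ob)$ yields $\int u_n\,\dd\rho\to\int u\,\dd\rho$. On the right-hand side I would invoke the continuity of the extended operator $\nabla_\gamma\colon\mathrm{Lip}(\Ob)\to L^\infty_\gamma(\Ob,\R^N)$ established in \cite{bouchitte2005completion}: along this sequence $\nabla_\gamma u_n\rightharpoonup\nabla_\gamma u$ weakly-$*$ in $L^\infty_\gamma$, and since $\upsilon\in L^1_\gamma(\Ob,\R^N)$ is a fixed element of the predual, the pairing $\int\upsilon\cdot\nabla_\gamma u_n\,\dd\gamma\to\int\upsilon\cdot\nabla_\gamma u\,\dd\gamma$ passes to the limit. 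Combining the two limits with the $C^1$ identity closes the argument.

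The real content, and the only genuinely hard point, is concentrated in this last step: one needs the closability of the map $u\mapsto\nabla_\gamma u$, i.e.\ that $u_n\to0$ uniformly with equibounded Lipschitz constants forces $\nabla_\gamma u_n\rightharpoonup0$ weakly-$*$ in $L^\infty_\gamma$, so that $\nabla_\gamma u$ is well defined on $\mathrm{Lip}(\Ob)$ and depends continuously on $u$. This is exactly the non-trivial part of the Bouchitté–Buttazzo–Seppecher completion theory recalled above, which I would quote from \cite{bouchitte2005completion} rather than reprove. A subordinate technical caveat is that $\upsilon$ is merely $\gamma$-integrable, not bounded, so the convergence on the right must be read as weak-$*$ convergence tested against the single $L^1_\gamma$ function $\upsilon$ — precisely the duality the continuity statement provides — and not as any stronger mode of convergence.
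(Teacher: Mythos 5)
Your argument is correct, but there is nothing in the paper to compare it with: this proposition is stated as a quoted result from \cite{bouchitte2005completion}, as part of the recalled background on tangential gradients, and the paper supplies no proof of its own. What your proof does is reconstruct how the integration-by-parts formula follows from the genuinely deep ingredient of that theory, namely the closability of $u\mapsto\nabla_\gamma u$ --- the fact that the operator extends uniquely and continuously from $C^1(\Ob)$ to $\mathrm{Lip}(\Ob)$ when $\mathrm{Lip}(\Ob)$ carries uniform convergence with equibounded Lipschitz constants and $L^\infty_\gamma(\Ob,\R^N)$ carries the weak-$*$ topology --- and you correctly identify this as the part to quote rather than reprove. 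The surrounding steps are sound: for $u\in C^1(\Ob)$ the identity is the meaning of $\operatorname{div}(\gamma\upsilon)=\rho$ (with the sign convention the statement itself fixes), upgraded from $\nabla u$ to $\nabla_\gamma u=\mathbf{P}_\gamma\nabla u$ via the pointwise self-adjointness of the orthogonal projection together with $\mathbf{P}_\gamma\upsilon=\upsilon$ $\gamma$-a.e.; and the limit passage is exactly the $L^1_\gamma$--$L^\infty_\gamma$ duality pairing against the fixed element $\upsilon$, as you note. Two small glosses you may want to tighten: the divergence hypothesis is naturally formulated against $\varphi\in\mathcal{D}(\R^N)$ and extends to $C^1(\Ob)$ test functions by a routine density argument using the finiteness of $\gamma$ and $\rho$, so it is not literally \emph{exactly} the $C^1(\Ob)$ statement; and you should say explicitly that the mollifications $u_n$ of a Lipschitz extension of $u$ have $\sup_n\mathrm{Lip}(u_n)\le C\,\mathrm{Lip}(u)$, since that uniform bound is precisely what licenses invoking the weak-$*$ continuity of $\nabla_\gamma$ along the sequence.
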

	To end this section let us recall the following useful approximation result  \cite[Lemma A.1]{igbida2018augmented} (see also \cite[Lemma 3.1]{nguyen2021monge} for degenerate case of $H$).
	\begin{lemma}\label{l3} Let $H$ be a non-degenerate Finsler metric
 and $u\in W ^{1,\infty}(\Omega)$ such that $H ^{*}(x,\nabla u(x)) \leq 1$ for \text{ a.e.} $x\in \Omega$. Then, there exists a sequence of $u_\epsilon\in C^{1}(\Ob)$ such that $u_\epsilon \rightrightarrows u$ uniformly on $\overline\Omega$  as $\epsilon\to 0$  and
		\[
		H^* (x,\nabla u_\epsilon (x)) \leq 1 \text{ for all } x\in \overline \Omega. 
		\]
	\end{lemma}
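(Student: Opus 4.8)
The plan is to reduce the statement to a mollification argument, the only genuine difficulty being to absorb the $x$-dependence of the anisotropy. First I would record that the constraint $H^{*}(x,\nabla u(x))\le 1$ a.e.\ is equivalent to $u$ being $1$-Lipschitz with respect to the intrinsic distance $d_H$ (cf.\ \cite[Proposition 2.1]{ennaji2020augmented}); in particular, by the non-degeneracy \eqref{equivhs} one has $|\nabla u|\le 1/\tilde a$ a.e., so that the relevant gradients lie in the fixed compact ball $\overline B_{1/\tilde a}\subset\R^N$ and $u$ is Lipschitz for the Euclidean distance as well. Since mollification near $\partial\Om$ uses values of $u$ slightly outside $\Ob$, I would extend the data to an open neighborhood $\Om'\supset\Ob$: extend $H$ to $\Om'\times\R^N$ as a non-degenerate Finsler metric and extend $u$ to $\Om'$ by the McShane-type formula $\tilde u(x)=\inf_{y\in\Ob}\{u(y)+d_H(y,x)\}$ for the extended $d_H$, which is still $1$-Lipschitz, hence satisfies $H^{*}(x,\nabla \tilde u(x))\le 1$ a.e.\ on $\Om'$ and restricts to $u$ on $\Ob$.

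Next I would set $u_\epsilon=\tilde u*\rho_\epsilon$ for a standard mollifier $\rho_\epsilon$ supported in $B_\epsilon$, with $\epsilon$ small enough that $\Ob+B_\epsilon\subset\Om'$; then $u_\epsilon\in C^{\infty}(\Ob)$ and $u_\epsilon\rightrightarrows u$ uniformly on $\Ob$. To control its gradient I would use Jensen's inequality together with the convexity and $1$-homogeneity of $H^{*}(x,\cdot)$, obtaining, for every $x\in\Ob$,
\[
H^{*}(x,\nabla u_\epsilon(x))=H^{*}\Big(x,\int \rho_\epsilon(y)\,\nabla\tilde u(x-y)\,\dd y\Big)\le \int \rho_\epsilon(y)\,H^{*}(x,\nabla\tilde u(x-y))\,\dd y,
\]
and then splitting $H^{*}(x,\nabla\tilde u(x-y))\le H^{*}(x-y,\nabla\tilde u(x-y))+\big|H^{*}(x,\nabla\tilde u(x-y))-H^{*}(x-y,\nabla\tilde u(x-y))\big|$, where the first term is $\le 1$ for a.e.\ $y$.

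The main obstacle is the second term, which is precisely where the anisotropy intervenes: one must bound $|H^{*}(x,q)-H^{*}(x',q)|$ uniformly over $q\in\overline B_{1/\tilde a}$ whenever $|x-x'|\le\epsilon$. This follows because $H^{*}$ is continuous on the compact set $\overline{\Om'}\times\overline B_{1/\tilde a}$ and $1$-homogeneous in $q$, hence uniformly continuous there, so there is a modulus $\omega(\epsilon)\to 0$ with $|H^{*}(x,q)-H^{*}(x',q)|\le\omega(\epsilon)$ for all such $x,x',q$. Integrating, the second term contributes at most $\omega(\epsilon)$ (for every fixed $x$, the exceptional set of $y$ has measure zero), so that $H^{*}(x,\nabla u_\epsilon(x))\le 1+\omega(\epsilon)$ for all $x\in\Ob$.

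Finally I would remove the slack by rescaling: set $w_\epsilon:=u_\epsilon/(1+\omega(\epsilon))$. By $1$-homogeneity of $H^{*}(x,\cdot)$ this gives $H^{*}(x,\nabla w_\epsilon(x))\le 1$ for every $x\in\Ob$, and since $u$ is bounded on the bounded set $\Om$ one still has $w_\epsilon\rightrightarrows u$ uniformly on $\Ob$ as $\epsilon\to 0$. Relabelling $w_\epsilon$ as $u_\epsilon$ then yields the desired sequence in $C^{1}(\Ob)$ (in fact in $C^{\infty}$), which completes the argument.
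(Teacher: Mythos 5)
Your overall strategy (extend $u$ past the boundary, mollify, control the $x$-dependence of $H^*$ by uniform continuity on a compact set, then rescale away the slack) is the natural one; note that the paper itself does not prove this lemma but merely recalls it from \cite[Lemma A.1]{igbida2018augmented}. However, your extension step contains a genuine gap. You assert that the McShane-type formula $\tilde u(x)=\inf_{y\in\Ob}\{u(y)+d_H(y,x)\}$, with $d_H$ now the intrinsic distance of the enlarged set $\Om'$, ``restricts to $u$ on $\Ob$''. For that you would need $u(x)-u(y)\le d_H^{\Om'}(y,x)$ for all $x,y\in\Ob$; but the hypothesis $H^*(x,\nabla u)\le 1$ a.e.\ in $\Om$ only yields the $1$-Lipschitz bound with respect to the intrinsic distance of $\Ob$ itself, and enlarging the domain can \emph{strictly decrease} the intrinsic distance, because admissible curves may now take shortcuts through $\Om'\setminus\Ob$. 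Concretely, take $\Om=\{1<|z|<2\}\subset\R^2$, $H(x,p)=|p|$, $z_0=(3/2,0)$, and $u(\cdot)=d_H^{\Ob}(z_0,\cdot)$; then $|\nabla u|\le 1$ a.e.\ in $\Om$, but for $x=(-3/2,0)$ the geodesic in any $\delta$-neighborhood $\Om'$ winds around the smaller hole $\{|z|\le 1-\delta\}$, so that $d_H^{\Om'}(z_0,x)<d_H^{\Ob}(z_0,x)=u(x)$, whence $\tilde u(x)\le u(z_0)+d_H^{\Om'}(z_0,x)<u(x)$. Thus $\tilde u$ is \emph{not} an extension of $u$, your mollified functions converge uniformly to $\tilde u$ rather than to $u$, and the argument collapses at this point (this is not a pathology of the Finsler setting: it already occurs for the Euclidean metric on a smooth non-convex domain).

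The gap is repairable, but it requires a geometric ingredient you never supply: for a smooth bounded domain, using the nearest-point projection onto $\Ob$ (well defined and Lipschitz with constant $1+C\delta$ on a tubular neighborhood, by positive reach) together with the subadditivity of $H(x,\cdot)$ and \eqref{equiv}, one shows $d_H^{\Ob}(y,x)\le (1+\omega_0(\delta))\,d_H^{\Om'}(y,x)$ for all $x,y\in\Ob$ with $\omega_0(\delta)\to 0$. Replacing $d_H$ by $(1+\omega_0(\delta))\,d_H^{\Om'}$ in your McShane formula then produces a genuine extension of $u$ satisfying $H^*(x,\nabla\tilde u)\le 1+\omega_0(\delta)$ a.e.\ in $\Om'$, and the extra factor is absorbed by exactly the same final rescaling you already perform for $\omega(\epsilon)$. (Alternatively, one can avoid extension altogether by the standard partition-of-unity device: near the boundary, translate $u$ slightly in a direction entering $\Om$ before mollifying, glue the pieces, and observe that the commutator terms $\nabla\theta_i\,(u_i-u)$ are uniformly small, hence also removable by rescaling.) The remaining steps of your proof --- Jensen's inequality for $H^*(x,\cdot)$, the uniform-continuity bound for $|H^*(x,q)-H^*(x',q)|$ on $\overline{\Om'}\times\overline{B}_{1/\tilde a}$, and the division by $1+\omega(\epsilon)$ --- are correct as written.
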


\bibliographystyle{abbrv}
\bibliography{references_all}

\end{document}